\setlist[enumerate]{leftmargin=1.5em}
\setlist[itemize]{leftmargin=1.5em}
\definecolor{green}{rgb}{0,0.8,0} 
\newtheorem{theorem}{Theorem}[section]
\newtheorem{lemma}[theorem]{Lemma}
\theoremstyle{definition}
\theoremstyle{remark}
\newtheorem{remark}[theorem]{Remark}
\numberwithin{equation}{section}
\newcommand{\nrm}[1]{\Vert#1\Vert}
\newcommand{\brk}[1]{\langle#1\rangle}
\newcommand{\nnrm}[1]{{\vert\kern-0.25ex\vert\kern-0.25ex\vert #1 
    \vert\kern-0.25ex\vert\kern-0.25ex\vert}}
\newcommand{\lap}{\Delta}
\newcommand{\ud}{\mathrm{d}}
\newcommand{\rd}{\partial}
\newcommand{\nb}{\nabla}
\newcommand{\tell}{\tilde{\ell}}
\newcommand{\bell}{\bar{\ell}}
\newcommand{\alp}{\alpha}
\newcommand{\gmm}{\gamma}
\newcommand{\dlt}{\delta}
\newcommand{\eps}{\epsilon}
\newcommand{\kpp}{\kappa}
\newcommand{\omg}{\omega}
\newcommand{\bbR}{\mathbb R}
\newcommand{\bbT}{\mathbb T}
\newcommand{\bbZ}{\mathbb Z}
\newcommand{\calD}{\mathcal D}
\newcommand{\calE}{\mathcal E}
\newcommand{\calL}{\mathcal L}
\newcommand{\calS}{\mathcal S}
\begin{document}

\title{Vortex stretching and enhanced dissipation for the incompressible 3D Navier-Stokes equations} 
\author{In-Jee Jeong\thanks{School of Mathematics, Korea Institute for Advanced Study. E-mail: ijeong@kias.re.kr} \and Tsuyoshi Yoneda\thanks{Department of Mathematics, University of Tokyo. E-mail: yoneda@ms.u-tokyo.ac.jp}}
\date{\today}

 


\maketitle


\begin{abstract}
	We consider the 3D incompressible Navier-Stokes equations under the following $2+\frac{1}{2}$-dimensional situation: small-scale horizontal vortex blob being stretched by large-scale, anti-parallel pairs of vertical vortex tubes. We prove enhanced dissipation induced by such vortex-stretching. 
\end{abstract}


\section{Introduction}
 
The zeroth law of turbulence states that, in the limit of vanishing viscosity, the rate of kinetic energy dissipation for solutions to the incompressible Navier-Stokes equations becomes nonzero. This is one of the central ansatz of Kolmogorov's 1941 theory (\cite{K41}). To formulate this law, we recall the 3D incompressible Navier-Stokes equations on $\mathbb{T}^3_*:=(\mathbb{R}/2\mathbb{Z})^3$: \begin{equation} \label{NS}
\left\{
\begin{aligned}
\rd_t u^\nu + u^\nu\cdot\nabla u^\nu+\nabla p = \nu \lap u^\nu + f,\\
\nabla\cdot u^\nu = 0,\\
u^\nu(t=0) = u^\nu_0
\end{aligned}
\right.
\end{equation} where $\nu>0$ is the viscosity and $u^\nu : \bbT^3_* \rightarrow \bbR^3, p : \bbT^3_* \rightarrow \bbR$ denote the velocity and pressure of the fluid, respectively. Here $f : \bbT^3_* \rightarrow \bbR^3$ is some external force. Assuming that the solution is sufficiently smooth, taking the dot product of the equation with $u^\nu$ and integrating over $\bbT^3_*$ gives the energy balance \begin{equation*}
\begin{split}
\frac{d}{dt} \frac{1}{2}\nrm{u^\nu(t)}_{L^2}^2 = \int_{\bbT^3_*} f(t) \cdot u^\nu(t) dx - \nu \nrm{\nabla u^\nu(t)}_{L^2}^2 . 
\end{split}
\end{equation*} The zeroth law then postulates that, under the normalization $\nrm{u_0^\nu}_{L^2} = 1$, the mean energy dissipation rate does not vanish as $\nu \rightarrow 0^+$: \begin{equation*}
\begin{split}
\liminf_{\nu\rightarrow 0}\nu\brk{ |\nabla u^\nu |^2 }> 0,
\end{split}
\end{equation*} where $\brk{\cdot}$ usually denotes some \textit{ensemble} or long-time, space averages. 
Laboratory experiments and numerical simulations of turbulence both confirm the above zeroth law (\cite{BV,E3,KIYIU,Va}).  See recent works of Drivas \cite{Dr} and Buckmaster-Vicol \cite{BV} for more precise formulation and developments related to  the zeroth law. 
In this paper, we take $\brk{\cdot}$ to be a short-time space average, and take sequences of smooth initial data $u_0^{\nu} \in C^\infty(\bbT^3_*)$. 
Hence we may take $f \equiv 0$, and the energy balance is justified. 
However, in the short-time, a trivial version of zeroth law appears, and thus we need to avoid it carefully. 
We now explain it more precisely. {Let $H^s$ ($s\in\mathbb{R}$) be Sobolev spaces.} If we choose $\{u_{0,n}\}_n$ satisfying $\|u_{0,n}\|_{H^1}\to\infty$,
and choose $T_n$ $(T_n\to 0, n\to\infty$) to be 
$\sup_{0<t<T_n}\nu_n\|u_{0,n}-u_{0,n}^{\nu_n}(t)\|^2_{H^1}<\epsilon$ (for sufficiently small $\epsilon>0$) with  $\nu_n\approx\|u_{0,n}\|_{H^1}^{-2}$,
then we have 
\begin{equation}
\lim_{n\to \infty}\, \nu_n \frac{1}{T_n}\int_0^{T_n}\int_{\mathbb{T}^2}|\nabla u^{\nu_n}_n(t,x)|^2dxdt \approx 1.
\end{equation}
This is simply due to the fact that  $\nu_n\approx \|u_{0,n}\|_{H^1}^{-2}$. 
Thus to consider a non-trivial zeroth law in mathematics, it is necessary to add the following condition:
\begin{equation}\label{vortex-stretching-condition}
\liminf_{n\to \infty}\, \frac{\frac{1}{T_n}\int_0^{T_n}\int_{\mathbb{T}^2}|\nabla u^{\nu_n}_n(t,x)|^2dxdt}{\|\nabla u_{0,n}\|_{L^2}^2}=\infty.
\end{equation}
The above condition can be interpreted as occurrence of strong ``vortex-stretching". We shall prove a version of the zeroth law satisfying the above, which implies in particular enhanced dissipation. 
 We achieve this in the framework of $2+\frac{1}{2}$-dimensional flow, which we now explain.

\subsection{The $2+\frac{1}{2}$-dimensional flow}

The incompressible Euler equations are obtained by taking $\nu = 0$ in \eqref{NS}. Introducing the vorticity $\omega = \nabla \times u$, we obtain the 3D vorticity equations: 
\begin{equation*}
\begin{split}
\partial_t\omega + (u\cdot\nabla) \omega = (\omega\cdot \nabla) u,\quad x \in \mathbb{T}^3_* := (\mathbb{R}/2\mathbb{Z})^3 
\end{split}
\end{equation*} where the velocity $u$ is determined by the (periodic) 3D Biot-Savart law: \begin{equation*}
\begin{split}
u(t,x) = \int_{\mathbb{T}^3_*} K_3 \large( x - y \large) \omega(t,y) \, dy, 
\end{split}
\end{equation*} with \begin{equation*}
\begin{split}
K_3(x)v  = \frac{1}{4\pi} \frac{x \times v}{|x|^3} \quad (\text{with reflections}).
\end{split}
\end{equation*} The associated Lagrangian flow is then given by \begin{equation*}
\begin{split}
\partial_t \Phi(t,x)=u(t,\Phi(t,x))\quad\text{with}\quad \Phi(0,x)=x\in\mathbb{T}^3_*.
\end{split}
\end{equation*} 
In this paper, we shall examine a sequence of smooth initial vorticity of the form $$\omega_{n,0} = \omega_{n,0}^\calL + \omega_{n,0}^\calS$$ and we  restrict them to the following symmetry (with a slight abuse of notation):
\begin{equation*}
\omega^\calL_{n,0}=(0,0,\omega_{n,0}^\calL(x_1,x_2))^T\quad
\text{and}\quad 
\omega_{n,0}^\calS=(\omega_{n,0,1}^\calS(x_1,x_2), \omega_{n,0,2}^\calS(x_1,x_2),0)^T. 
\end{equation*} 
The corresponding solution also keeps this symmetry, which is commonly referred as to the $2+\frac{1}{2}$-dimensional flow. Note that the data and solution are independent of $x_3$, and in this setting there is a global unique smooth solution to the 3D Euler equations (also for the 3D Navier-Stokes) with initial data $\omg_{n,0}$, which we shall denote by $\omega_{n}(t)$. By the Biot-Savart law,
\begin{eqnarray*}
	\nonumber
	u_{n}(t,x)
	= 
	\int_{\mathbb{T}^3_*} K_3\big(x - y \big) \omega_{n}(t,y) \, dy,\
	\partial_t \Phi_{n}(t,x)=u_{n}(t,\Phi_{n}(t,x))
\end{eqnarray*} 
and then
\begin{equation*} 
\omega_{n}(t,\Phi_{n}(t,x)) 
= 
D\Phi_{n}(t,x) \omega_{n,0}(x)
=D\Phi_{n}(t,x)(\omega^\calL_{n,0}(x)+\omega_{n,0}^\calS(x)),
\end{equation*} 
where $D\Phi_n = (\rd_j\Phi_{n,i})_{1\le i,j \le 3}$. This is the famous Cauchy formula. 
Moreover, since there is no dependence on the third variable for the solution $u$, $\Phi_n$ is determined by the 2D flow arising from the solution of the 2D Euler equations with initial data $\omg^\calL_{n,0}(x_1,x_2)$. We denote the 2D flow map by $\eta_n$, and by trivially extending the 2D flow into 3D, with some abuse of notation, the 3D flow map associated with the solution for $\omega^\calL_{0,n}$ can be written as 
\begin{equation*}
D\eta_n:=
\begin{pmatrix}
\partial_1\eta_{n,1}&\partial_2\eta_{n,1}&0\\
\partial_1\eta_{n,2}&\partial_2\eta_{n,2}&0\\
0&0&1
\end{pmatrix},
\quad
\quad
D\eta_n^{-1}=
\begin{pmatrix}
\partial_2\eta_{n,2}&-\partial_2\eta_{n,1}&0\\
-\partial_1\eta_{n,2}&\partial_1\eta_{n,1}&0\\
0&0&1
\end{pmatrix}.
\end{equation*}
It is not difficult to verify that $\rd_j\Phi_{n,i} = \rd_j\eta_{n,i}$ for $1 \le i, j \le 2$ and we have the following explicit formulas: 
\begin{equation}\label{scale-separation-expression}
\begin{split}
\omega_n^\calL(t,\eta_n(t,x)) = \omega_{n,0}^\calL(x)\quad\text{and}\quad \omega_n^\calS(t,\eta_n(t,x)) = D\eta_n(t,x)\omega_{n,0}^\calS(x).
\end{split}
\end{equation}
Again, by the Biot-Savart law, we can also recover the large-scale velocity:
\begin{equation*}
\nonumber
u_{n}^\calL(t,x)
= 
\int_{\mathbb{T}^3_*} K_2\big(x - y \big) \omega_{n}^\calL(t,y) \, dy
\end{equation*}  where \begin{equation*}
\begin{split}
K_2(x) = \frac{1}{2\pi} \frac{x^\perp}{|x|^2} \quad \mbox{(with reflections)} 
\end{split}
\end{equation*}
and also $u_{n}^\calS(t,x)
=
u_{n,0}^\calS(t,\eta_n(t,x))$ where $u_{n}^\calS(t,x)$ can be uniquely recovered from $\omg_{n}^\calS(t,x)$ by $\nabla\times u_{n,0}^\calS=\omega_{n,0}^\calS$ and $\nb\cdot u_{n}^\calS(t,x) = 0$. Now note that since 
$\omega^\calS_{n}(t)\perp e_3$ and $\omega^\calL_{n}(t)\parallel e_3$,  
we have $$\|\omega_n(t)\|_{L^2}^2=\|\omega^\calL_{n}(t)\|_{L^2}^2+\|\omega^\calS_{n}(t)\|_{L^2}^2.$$

\subsection{Main results}

To state our result, let us briefly explain the construction of the initial data sequence. We consider data independent of $x_3$, which allows us to treat them as functions defined on $\bbT^2_L := (\bbR/(2L\bbZ))^2$. We take the Bahouri-Chemin stationary solution introduced in \cite{BC} $\omega(x_1,x_2) = \mathrm{sgn}(x_1)\mathrm{sgn}(x_2)$ on $[-L,L]^2$ and smooth it out at scale $\ell \ll L$ to define $\omega^{\calL}_{n,0}(x)$. Then we place a small ``bump'' $\omega^\calS_{n,0}(x)$ in a ball of radius $\tell \ll \ell$ centered at the origin. Then the initial data sequence is simply given by $\omega_{n,0} = \omega^{\calL}_{n,0} + \omega^{\calS}_{n,0}$, where $\ell, \tell$ (and even $L$ in some cases) depend on $n$. Details of the construction will be explained in Section \ref{sec:proof}; for now see Figure \ref{fig:setup}.
	We now give the main theorem, which roughly state that the vortex-stretching in the $2+\frac{1}{2}$-dimensional setup is enough to create vortex stretching of order $\nu^{-\eps_0}$ for some $\eps_0>0$ for the 3D Navier Stokes equations in the limit $\nu \rightarrow 0^+$, with uniformly bounded {(at least) in $L^2$ initial data}. To motivate the statements, let us recall the energy identity for the Navier-Stokes equations: \begin{equation*}
	\begin{split}
	\frac{1}{2}\nrm{u^\nu(t)}_{L^2}^2 + \nu\int_0^t \nrm{\nb u^\nu(s)}_{L^2}^2 ds = \frac{1}{2}\nrm{u^\nu_0}_{L^2}^2. 
	\end{split}
	\end{equation*} From the symmetry in our initial data, the solution $u^\nu(t)$ can be written as the sum $u^{\calL,\nu} +u^{\calS,\nu}$ where $u^{\calL,\nu} $ and $u^{\calS,\nu}$ are defined by \begin{equation} 
	\left\{
	\begin{aligned} 
	&\rd_t u^{\calL,\nu} + u^{\calL,\nu} \cdot \nb u^{\calL,\nu} + \nb p^{\nu}= \nu \lap u^{ {\calL},\nu} , \quad \nb \cdot u^{\calL,\nu} = 0, \\
	&\rd_t u^{\calS,\nu} + u^{\calL,\nu} \cdot \nb u^{\calS,\nu} = \nu \lap u^{\calS,\nu} 
	\end{aligned}
	\right.
	\end{equation}  and we similarly have the following energy identity for the small-scale: \begin{equation*}
	\begin{split}
	\frac{1}{2}\nrm{u^{\calS,\nu}(t)}_{L^2}^2 + \nu\int_0^t \nrm{\nb u^{\calS,\nu}(s)}_{L^2}^2 ds = \frac{1}{2}\nrm{u^{\calS,\nu}_0}_{L^2}^2. 
	\end{split}
	\end{equation*}

 {\begin{theorem}[Enhanced dissipation] \label{thm:main4} There exists some absolute constant $\dlt>0$  such that the following statements hold: for any $0<\bar{a}_0<1$, there exist $L_n\le 1$, viscosity constants $\nu_n \rightarrow 0$, and a sequence of $C^\infty$-smooth initial data $\omg_{n,0} = \omg_{n,0}^\calL+ \omg_{n,0}^\calS$  with uniform bounds \begin{equation*}
	\begin{split}
	 \|u_{n,0}\|_{L^2(\mathbb{T}^3_n)}^2=\nrm{u_{n,0}^\calL}^2_{L^2(\bbT^3_n)} + \nrm{u_{n,0}^\calS}^2_{L^2(\bbT^3_n)}{\approx L_n^2,}
	\end{split}
	\end{equation*} \begin{equation*}
	\begin{split}
	\nrm{\omg_{n,0}^\calL}_{L^\infty(\bbT^3_n)} \lesssim 1, 
	\end{split}
	\end{equation*} defined on the torus $\bbT^3_n := (\bbR/(2L_n\bbZ))^2\times(\bbR/(2\bbZ))$ such that the unique smooth solution $u^{\nu_n}_{n}$ of the 3D Navier-Stokes equations with initial data $u_{n,0}$ and viscosity $\nu_n$ on $\bbT^3_n$ satisfies \begin{equation}\label{eq:modified-1}
	\begin{split}
	\liminf_{n \rightarrow \infty} \nu_n^{\bar a_0} \frac{1}{\dlt}\int_0^{\dlt} \nrm{\nb u^{\nu_n}_n(t)}_{L^2(\bbT_n^3)}^2 dt \gtrsim \|u_{n,0}\|_{L^2(\mathbb{T}^3_n)}^2.
	\end{split}
	\end{equation}  
\end{theorem}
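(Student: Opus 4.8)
The strategy is to use the decomposition $u^{\nu}=u^{\calL,\nu}+u^{\calS,\nu}$ and the pointwise orthogonality $\omg^{\calS}\perp e_3\parallel\omg^{\calL}$, which give $\nrm{\nb u^\nu(t)}_{L^2}^2=\nrm{\omg^{\calL,\nu}(t)}_{L^2}^2+\nrm{\omg^{\calS,\nu}(t)}_{L^2}^2\ge\nrm{\omg^{\calS,\nu}(t)}_{L^2}^2$, and to show that the hyperbolic large-scale flow near the origin amplifies $\nrm{\omg^{\calS,\nu}(t)}_{L^2}$ by a fixed negative power of the small scale $\tell_n$ over the fixed interval $[0,\dlt]$. \emph{Step 1 (large scale).} Fix $\ell_n\to0$ and a horizontal period $L_n\le1$ with $\ell_n\ll L_n$ (one may take $L_n\aeq1$), and let $\omg^\calL_{n,0}$ be the Bahouri--Chemin profile $\sgn(x_1)\sgn(x_2)$ mollified at scale $\ell_n$ while keeping the odd-odd symmetry, so $\nrm{\omg^\calL_{n,0}}_{L^\infty}\aleq1$. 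From \cite{BC} and the construction, the corresponding $2$D Euler velocity is, near the origin, an essentially linear hyperbolic field $\nb u^\calL_n(t,0)\approx\mathrm{diag}(\lmb_n,-\lmb_n,0)$ with stretching rate $\lmb_n\aeq\log(L_n/\ell_n)\to\infty$, and this picture -- trajectories remaining nearly linear, and the rate comparable to $\lmb_n$ -- is stable inside a ball $B_{\ell_n/100}$ for $t\in[0,\dlt]$, where $\dlt$ is a small absolute constant fixed once and for all. By $2$D Navier--Stokes/Euler comparison, together with the steepening bound $\nrm{\nb\omg^\calL_n(t)}_{L^\infty}\aleq\ell_n^{-1}e^{Ct\lmb_n}$, the same structure persists for the viscous large scale $u^{\calL,\nu_n}_n$ as long as $\nu_n$ is below a threshold of the form $\ell_n^{2+C\dlt}$.

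\emph{Steps 2--3 (small scale and inviscid amplification).} Choose $\tell_n\to0$ much smaller than $\ell_n^{1+C\dlt}$, so that trajectories starting in $B_{\tell_n}(0)$ stay inside $B_{\ell_n/100}$ for $t\le\dlt$, and let $\omg^\calS_{n,0}$ be a divergence-free horizontal bump supported in $B_{\tell_n}(0)$ -- concretely $u^\calS_{n,0}=(0,0,\psi_{n,0})$ for a scalar bump $\psi_{n,0}$, so $\omg^\calS_{n,0}=(\rd_2\psi_{n,0},-\rd_1\psi_{n,0},0)$ -- chosen so that near the origin its vorticity is aligned with the expanding eigendirection of $\nb u^\calL_n$, and normalized so that $\nrm{u^\calS_{n,0}}_{L^2}\aeq\nrm{u^\calL_{n,0}}_{L^2}$; thus $\nrm{\omg^\calS_{n,0}}_{L^2}\aeq\tell_n^{-1}\nrm{u^\calS_{n,0}}_{L^2}\aeq\tell_n^{-1}\nrm{u_{n,0}}_{L^2}$. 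For the inviscid flow, the Cauchy formula \eqref{scale-separation-expression}, the volume preservation of $\eta_n$, and the confinement/alignment of Step 1 give $\nrm{\omg^\calS_n(t)}_{L^2}\ageq e^{\lmb_n t}\nrm{\omg^\calS_{n,0}}_{L^2}$ for $t\in[0,\dlt]$.

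\emph{Step 4 (viscous amplification -- the crux).} Since $u^{\calL,\nu_n}_n$ is divergence free, the small-scale vorticity satisfies
\[
\tfrac{d}{dt}\nrm{\omg^{\calS,\nu_n}_n(t)}_{L^2}^2=2\!\int\big((\omg^{\calS,\nu_n}_n\cdot\nb)u^{\calL,\nu_n}_n\big)\cdot\omg^{\calS,\nu_n}_n\,dx-2\nu_n\nrm{\nb\omg^{\calS,\nu_n}_n(t)}_{L^2}^2 .
\]
One must establish (i) that the stretching term stays coercive, $\ageq\lmb_n\nrm{\omg^{\calS,\nu_n}_n(t)}_{L^2}^2$ -- i.e.\ the bump keeps its localization in $B_{\ell_n/100}$ and its alignment with the expanding direction under the transport--diffusion by $u^{\calL,\nu_n}_n$ -- and (ii) that the dissipation is negligible against it: the thin direction of the bump cannot shrink below $\tell_n e^{-\lmb_n t}$ (nor below the diffusion length $\sqrt{\nu_n t}$), so $\nrm{\nb\omg^{\calS,\nu_n}_n(t)}_{L^2}^2\aleq\tell_n^{-2}e^{2\lmb_n t}\nrm{\omg^{\calS,\nu_n}_n(t)}_{L^2}^2$, whence $\nu_n\nrm{\nb\omg^{\calS,\nu_n}_n(t)}_{L^2}^2\ll\lmb_n\nrm{\omg^{\calS,\nu_n}_n(t)}_{L^2}^2$ as soon as $\nu_n$ lies below a second threshold of the form $\tell_n^2\ell_n^{C\dlt}$. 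Granting (i)--(ii), Gronwall yields $\nrm{\omg^{\calS,\nu_n}_n(t)}_{L^2}^2\ageq e^{c\lmb_n t}\nrm{\omg^\calS_{n,0}}_{L^2}^2$ on $[0,\dlt]$ for an absolute $c>0$, i.e.\ a gain of a fixed negative power $\ell_n^{-c'\dlt}$ (up to $L_n$-factors). I expect (i)--(ii) -- the quantitative stability of hyperbolic vortex stretching under a minute diffusion, in particular ruling out that diffusion prematurely delocalizes or misaligns the bump -- to be the main obstacle; it is precisely what forces $\nu_n$ to be a sufficiently high power of the small scales.

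\emph{Step 5 (conclusion).} Integrating in time,
\[
\frac1\dlt\int_0^\dlt\nrm{\nb u^{\nu_n}_n(t)}_{L^2}^2\,dt\ge\frac1\dlt\int_0^\dlt\nrm{\omg^{\calS,\nu_n}_n(t)}_{L^2}^2\,dt\ageq\frac{\ell_n^{-c'\dlt}\,\tell_n^{-2}}{\dlt\,\lmb_n}\,\nrm{u_{n,0}}_{L^2}^2 .
\]
It remains to pick $\nu_n$. Take $\nu_n:=\big(\lmb_n\tell_n^2\ell_n^{c'\dlt}\big)^{1/\bar a_0}$; then $\nu_n\to0$, and since $\bar a_0<1$ while $\lmb_n$ is only logarithmic in $\ell_n^{-1},\tell_n^{-1}$, for $\ell_n,\tell_n$ small enough (with $\tell_n$ a sufficiently high power of $\ell_n$) this $\nu_n$ lies well below both thresholds $\ell_n^{2+C\dlt}$ and $\tell_n^2\ell_n^{C\dlt}$ required in Steps 1 and 4, so all the previous steps apply. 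Substituting into the last display,
\[
\nu_n^{\bar a_0}\,\frac1\dlt\int_0^\dlt\nrm{\nb u^{\nu_n}_n(t)}_{L^2}^2\,dt\ageq\frac1\dlt\,\nrm{u_{n,0}}_{L^2}^2\ageq\nrm{u_{n,0}}_{L^2}^2 ,
\]
which is \eqref{eq:modified-1} (choosing $\nu_n$ slightly larger, still $\to0$, makes the $\liminf$ infinite relative to $\nrm{u_{n,0}}_{L^2}^2$ when $\inf_n L_n>0$). Finally, since $\nrm{\nb u_{n,0}}_{L^2}^2\aeq\tell_n^{-2}\nrm{u_{n,0}}_{L^2}^2$, the same construction also satisfies the non-triviality condition \eqref{vortex-stretching-condition}, the relevant ratio being $\ageq\ell_n^{-c'\dlt}/\lmb_n\to\infty$.
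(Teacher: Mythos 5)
Your overall architecture matches the paper's: smoothed Bahouri--Chemin large scale with hyperbolic rate $\lmb_n\aeq\log(1/\bell_n)$ near the origin, a passive horizontal vortex blob at scale $\tell_n\ll\ell_n$, exponential amplification $e^{c\lmb_n\dlt}$ of the small-scale enstrophy over a fixed window, and a choice of $\nu_n$ balancing the amplified enstrophy against $\nu_n^{\bar a_0}$. The decisive difference is how the viscous solution is controlled, and that is exactly where your proposal has a genuine gap. Your Step 4 rests on two unproven claims: (i) coercivity of $\int(\omg^{\calS,\nu}\cdot\nb)u^{\calL,\nu}\cdot\omg^{\calS,\nu}\gtrsim\lmb_n\nrm{\omg^{\calS,\nu}}_{L^2}^2$, which requires the viscous small-scale vorticity to stay pointwise aligned with the expanding eigendirection (the quadratic form is $\lmb_n(\omg_1^2-\omg_2^2)+O(\dlt\lmb_n)\omg_1\omg_2$, so any rotation into the contracting direction destroys the sign) and to stay localized where the sharp lower bound on $\rd_1u_1^{\calL,\nu}$ holds; and (ii) a gradient-to-amplitude ratio bound for $\omg^{\calS,\nu}$ whose proof would already presuppose the lower bound on $\nrm{\omg^{\calS,\nu}}_{L^2}$ you are trying to establish, so at minimum a bootstrap is needed. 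You flag these as ``the main obstacle,'' but they are not side conditions --- they are the theorem. The paper avoids this entirely: it never proves a lower bound on the viscous stretching term. Instead it derives the lower bound $\nrm{\omg^{\calS}_n(t)}_{L^2}\gtrsim e^{c\lmb_n t}\nrm{\omg^{\calS}_{n,0}}_{L^2}$ for the \emph{inviscid} solution from the Cauchy formula and the Lagrangian-deformation ODE (Lemma \ref{lem:LLD}), and then proves a chain of quantitative inviscid-limit estimates ($I^{\calL}$, $I^{\calS}$, $II^{\calL}$, $II^{\calS}$ in Sections \ref{subsubsec:inviscid-1}--\ref{subsubsec:inviscid-4}), each of the form $C\nu\times(\text{data})$, transferring the lower bound to $\omg^{\calS,\nu}$ by the triangle inequality. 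The admissible $\nu_n$ is then \emph{forced} by requiring $II^{\calS}\ll\nrm{\omg^{\calS}_n}_{L^2}^2$, giving \eqref{eq:nu_n}, roughly $\nu_n\aeq\tell_n^4L_n^{-2}$ --- much smaller than the threshold $\aeq\tell_n^2\ell_n^{C\dlt}$ your direct method would need to tolerate.

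This discrepancy in admissible viscosities is why your secondary claim that ``one may take $L_n\aeq1$'' for all $\bar a_0\in(0,1)$ is unsupported. With the paper's forced $\nu_n\aeq\tell_n^4L_n^{-2}$ and $L_n=1$, the balance $\nu_n^{\bar a_0}\nrm{\omg^{\calS}_{n,0}}_{L^2}^2\gtrsim\nrm{u^{\calS}_{n,0}}_{L^2}^2$ reads $\tell_n^{4\bar a_0}\gtrsim\tell_n^{2}$ up to $\bell_n^{c\dlt}$ corrections, which fails for $\bar a_0>\frac12$; this is precisely the paper's remark that $L_n\to0$ is required there, and it is also why the paper must tune the extra parameter $q$ in \eqref{eq:norm2} to keep $\nrm{u_{n,0}}_{L^2}^2\aeq L_n^2$. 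Your scheme only closes with $L_n=1$ because you assume the direct argument survives up to $\nu_n\aeq\tell_n^{2/\bar a_0}$; absent a proof of Step 4 at that viscosity level, the case $\bar a_0>\frac12$ is not covered. (You would also need to check that your various exponents can be arranged with a single absolute $\dlt$ valid uniformly in $\bar a_0$, as the theorem requires.) To repair the proposal along the paper's lines, replace Step 4 by the quantitative Euler/Navier--Stokes comparison at both scales and rerun your Step 5 with the resulting $\nu_n$, introducing $L_n\to0$ when $\bar a_0>\frac12$.
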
}

\begin{remark} We  remark that for $\bar{a}_0\le \frac{1}{2}$, we can take $L_n = 1$ for all $n$, while for $\bar{a}_0>\frac{1}{2}$, we need $L_n \rightarrow 0$. Of course, one can take $L_n$ to be dyadic and still regard the data as being defined on the unit torus; see Figure \ref{fig:setup3} illustrating this point. 
\end{remark}

\subsection{Discussions}

\subsubsection{Recent theoretical developments and ideas of the proof}

Regarding the well-posedness theory of the incompressible Euler equations, a recent breakthrough was made in the work of Bourgain-Li \cite{BL} (see also \cite{EJ,EM,MY,KS}) where the authors have shown \textit{ill-posedness} of the Euler equations in critical Sobolev spaces. In the case of 2D, the critical $L^2$-based Sobolev space is $H^1$ in terms of the vorticity. The strategy in \cite{BL} is to show that there exists \textit{large Lagrangian deformation} for arbitrarily short time with initial vorticity uniformly bounded in $H^1$. This large Lagrangian deformation is responsible for the statement of Theorem \ref{thm:main4}, as the small-scale vorticity is being stretched by the deformation of the base large-scale flow. To achieve this we need to prove a sharp and quantitative bounds on the Lagrangian deformation, using smoothed-out Bahouri-Chemin solutions. This should be compared with previous results \cite{BL,EJ} where Lagrangian deformation and vorticity norm growth were obtained via a contradiction argument. 

Another important breakthrough regarding the 2D Euler equations was the work of Kiselev-Sverak \cite{KS} on the double exponential growth of the vorticity gradient. The main tool was the so-called ``Key Lemma'' which surprisingly gave an explicit integral representation for the main term in the velocity gradient for vorticity capped in $L^\infty$ and is odd with respect to both axes (i.e. anti-parallel). To calculate in a sharp way the velocity gradient in our setting, we adopt the Kiselev-Sverak approach, which then yields a quantitative large Lagrangian deformation with a careful ODE argument. We achieve this improvement only in the concrete setting of perturbed Bahouri-Chemin vorticities. 

 {We use the $2+\frac{1}{2}$-dimensional flow construction to lift Lagrangian deformation into three dimensional space where vortex stretching is created. This gives a large growth of the $H^1$-norm of the vorticity of the Euler solution. If the Navier-Stokes solutions converge to the Euler solution in $H^1$, this would imply enhanced dissipation for the sequence of Navier-Stokes solutions. To obtain quantitative convergence rate, we perform hard calculations which is the content of Section \ref{subsec:inviscid}. For these calculations we need sharp estimates for the Euler and Navier-Stokes solutions, which are established in Sections \ref{subsec:large} and \ref{subsec:small}. In short, the following are technical advances achieved in this work}: \begin{itemize}
	\item We prove sharp, quantitative bounds on the perturbed Bahouri-Chemin solutions. 
	\item We obtain quantitative inviscid limit estimates, which does not seem available in the literature. 

\end{itemize}

In the estimates we prove in Section \ref{sec:proof}, we have retained all physical parameters until the very end (before \ref{subsubsec:final}), and therefore the resulting estimate could be useful for the readers who would like to try out different scaling of physical quantities as $n \rightarrow +\infty$. 

\subsubsection{Recent numerical results} 
	Let us mention a recent numerical simulation which have inspired the current work. Recently, using direct numerical simulations of the 3D Navier-Stokes equations, Goto, Saito, and Kawahara \cite{GSK} have found that sustained turbulence consists of a hierarchy of antiparallel pairs of vortex tubes. Their main conclusions can be summarized as follows, which bear some similarity with our constructions:  
	\begin{itemize}
		\item Turbulence, in the inertial length scales, is composed of hierarchy of vortex tubes with different sizes.
		\item At each hierarchy level, vortex tubes tend to form antiparallel pairs and they effectively
		stretch and create smaller-scale vortex tubes. Moreover, stretched vortex tubes tend to align in the direction perpendicular to larger-scale vortex tubes.
		\item Vortices at each hierarchical level are 
		most likely to be stretched in strain fields around $2$-$8$ times larger vortices.
	\end{itemize}
	It would be interesting to push our results further to be closer to the picture they have.

 {
\subsubsection{Energy dissipation for solutions with one-point singularity}\label{subsubsec:energy-diss}
} In our result, the large-scale vorticity is uniformly bounded in $L^\infty$. Therefore it is tempting to approach the actual zeroth law using initial data which is singular, e.g. vorticities which are only $C^{ {\alp-1}}$ and not better for $0\le\alp<1$. Regarding this point, we present a simple computation which illustrates that,  {when one considers velocity fields which is $C^\alp$ at a single point (say at the origin) and smooth away from it, the nonlinearity is not strong enough to cause anomalous energy dissipation.\footnote{This computation was suggested to us by one of the referees.} To this end, consider $\overline{u}_{\ell} := \varphi_{\ell}*u$ ($\varphi_{\ell}=\ell^{-d}\varphi(\ell^{-1}\cdot)$ with some mollifier $\varphi$) and we compute in $d$-dimensions the instantaneous energy change at $t = 0$: \begin{equation*}
		\begin{split}
		\frac{1}{2} \left.\frac{d}{dt}\right|_{t=0} \nrm{\overline{u}_{\ell}}_{L^2}^2 = \int_{\bbT^d} \nb\overline{(u_0)}_{\ell} : \tau_{\ell}(u_0,u_0) \ud x, 
		\end{split}
		\end{equation*} where \begin{equation*}
		\begin{split}
		\tau_{\ell}(u_0,u_0) = \overline{(u_0\otimes u_0)}_{\ell} - \overline{(u_0)}_{\ell}\otimes\overline{(u_0)}_{\ell}. 
		\end{split}
		\end{equation*} Splitting the integral into the ball around the singularity $B_\eps(0)$ and the rest $\bbT^d\backslash B_\eps(0)$, we obtain (cf. \cite{CET}) \begin{equation*}
		\begin{split}
		\left| \frac{1}{2} \left.\frac{d}{dt}\right|_{t=0} \nrm{\overline{u}_{\ell}}_{L^2}^2  \right| \lesssim \nrm{u_0}^3_{C^\alp(\bbT^d)} \ell^{3\alp-1}\eps^{d} + \nrm{u_0}^3_{C^1(\bbT^d\backslash B_\eps(0))}\ell^2 \lesssim \ell^{3\alp-1}\eps^d + \eps^{3(h-1)}\ell^2
		\end{split}
		\end{equation*} where we have used $\nrm{u_0}_{C^1(\bbT^d\backslash B_\eps(0))}\lesssim \eps^{\alp-1}$. Two expressions can be balanced by setting \begin{equation*}
		\begin{split}
		\eps = \ell^{\gmm_{\alp,d}}, \quad \gmm_{\alp,d} = \frac{3(1-\alp)}{3(1-\alp)+d} \in (0,1),
		\end{split}
		\end{equation*} and this gives the bound \begin{equation*}
		\begin{split}
		\left| \frac{1}{2} \left.\frac{d}{dt}\right|_{t=0} \nrm{\overline{u}_{\ell}}_{L^2}^2  \right| \lesssim\ell^{3\alp-1 + \gmm_{\alp,d}d }. 
		\end{split}
		\end{equation*} One sees that the exponent \begin{equation*}
		\begin{split}
		3\alp-1 + \gmm_{\alp,d}d  = \frac{9\alp(1-\alp) + 3\alp + 2d-3}{3(1-\alp)+d}>0
		\end{split}
		\end{equation*} whenever $0\le h<1$ and $d\ge 2$. This calculation (which in particular incorporates the case of $2+\frac{1}{2}$-dimensional flows) shows that the instantaneous energy change vanishes with the rate $\beta:= 3\alp-1 + \gmm_{\alp,d}d$.}

We refer to recent works of Luo and Shvydkoy (\cite{LS1, LS2, Sh}) which systematically studies the radially homogeneous solutions to 2D and 3D Euler equations and conclude absence of anomalous dissipation in that class of solutions. \\

 {\subsubsection{Upper bound on energy dissipation in the vanishing viscosity limit} 
	Given a sequence of initial data (normalized in $L^2$ norm by $L_n^2$) and viscosity constants, it is reasonable to define the index $0\le \bar{b}_0<1$ \begin{equation*}
	\begin{split}
	\bar{b}_0 := \inf\{ b_0 : \limsup_{n\rightarrow\infty} \nu_n^{b_0} \nrm{\nb u_{n,0}}_{L^2(\bbT^3_n)}^2 \lesssim \nrm{u_{n,0}}_{L^2(\bbT^3_n)}^2\}.
	\end{split}
	\end{equation*} In the above theorem, one can check from the proof {(see \eqref{avoid-trivial-zeroth-law})} that \begin{equation*}
	\begin{split}
	\bar{b}_0 = \bar{a}_0 - c_*\dlt, 
	\end{split}
	\end{equation*}  where $c_*>0$ is a constant depending only on $\bar{a}_0$ which possibly vanishes only when $\bar{a}_0 \rightarrow 1$ {(this $\bar b_0$ consideration essentially comes from \eqref{vortex-stretching-condition})}. 
	On the other hand, if one is interested only in the case of $\bar{b}_0=0$, we can take $\bar{a}_0 = c_0\dlt$ where $c_0>0$ is an absolute constant, with initial data sequence $\{ u_{n,0} \}$ uniformly bounded in $H^1(\bbT^3_*)$ with $\bbT^3_* = (\bbR/(2\bbZ))^3$. In this case, a recent result of Drivas and Eyink \cite{DE} puts a restriction that $c_0\dlt<\frac{2}{3}$, where $\dlt>0$ is the same universal constant in the statement of Theorem \ref{thm:main4}.   Let us explain it more precisely.
		They showed that if a sequence of Leray solutions $\{u^\nu\}_\nu$ are uniformly bounded in
		$L^3([0,\delta];B^\sigma_{3,\infty}(\mathbb{T}^3_*))$ for some $\sigma\in (0,1)$,
		then the corresponding solutions satisfy
		\begin{equation}\label{dissipation estimate}
		\nu \frac{1}{\dlt}\int_0^\delta\int_{\mathbb{T}^3_*} |\nb u^\nu(t,x)|^2  dxdt\lesssim \nu^{\frac{3\sigma-1}{\sigma+1}}.
		\end{equation}
		(Note that the function space $L^3([0,\delta];B^\sigma_{3,\infty}(\mathbb{T}^3_*))$ is physically natural; see Remark 1 in \cite{DE}.) The estimate \eqref{dissipation estimate} gives an upper bound on the value of the constant $\bar a_0$ from \eqref{eq:modified-1}: for $\sigma> (2-\bar a_0)/(2+\bar a_0)$, the sequence of solutions $\{u_n^{\nu_n}\}_n$ (the corresponding vorticities are $\{\omega_n^{\nu_n}\}_n$) does not belong to $L^3([0,\delta];B^\sigma_{3,\infty}(\mathbb{T}^3_*))$  uniformly in $n$.
		The proof is the following: 
		assume to the contrary that the sequence of solutions $\{u_n^{\nu_n}\}_n$ belongs to $L^3([0,\delta]; B^\sigma_{3,\infty}(\mathbb{T}^3_*))$ uniformly in $n$. 
		By \eqref{eq:modified-1}, we see
		\begin{equation*}
		\nu_n\int_0^\delta\int_{\mathbb{T}^3_*}|\nabla u^{\nu_n}_n(t,x)|^2dxdt\gtrsim\nu_n^{1-\bar a_0}.
		\end{equation*}
		Thus, if $\sigma$ satisfies  $1-\bar a_0>\frac{3\sigma-1}{\sigma+1}$, that is,
		$\sigma> (2-\bar a_0)/(2+\bar a_0)$, then this contradicts \eqref{dissipation estimate} for sufficiently large $n$. On the other hand,  the sequence of solutions $\{u^{\nu_n}_n\}_n$  belongs uniformly in $L^3_t B^\sigma_{3,\infty}$ with some 
		$\sigma$. To see this, one can directly estimate the equation \begin{equation*} 
		\begin{split}
		&\rd_t \omega_n^{\mathcal S,\nu} + u_n^{\mathcal L,\nu} \cdot \nabla \omega_n^{\mathcal S,\nu} = \nabla u_n^{\mathcal L,\nu} \omega_n^{\mathcal S,\nu} + \nu \lap \omega_n^{\mathcal S,\nu}
		\end{split}
		\end{equation*} in $L^p$: $\nrm{\omega_n^{\mathcal S,\nu}(t)}_{L^p} \lesssim \nrm{\omega_{n,0}^{\mathcal S}}_{L^p} \exp(\int_0^t \nrm{\nabla u_n^{\mathcal L,\nu}(s)}_{L^\infty} ds)$, with an implicit constant independent of $\nu \ge 0$. From our choice of initial data and $\nrm{\nabla u_n^{\mathcal L,\nu}}_{L^\infty} \lesssim n$ (see Lemma \ref{lem:velgrad-global} for details), it follows that the corresponding solution $\omega_n^{\mathcal S,\nu}$ belongs  to $L^\infty([0,t];L^{p(t)}(\mathbb{T}_*^2))$ with $p(t) = 2-ct$ for some constant $c>0$. 
		This is due to the fact that $\|\omega_{n,0}^{\mathcal S}\|_{L^p}e^{\int_0^t\|\nabla  {u}_n^{\mathcal L,\nu}(s)\|_{L^\infty}ds}\lesssim e^{n(1-2/p+t)}$
		and to get the uniform bound, $1-2/p+t$ must be zero.
		Then at least for $t > 0$ sufficiently small, the velocity must be uniformly in $L^\infty_t W^{1,p(t)} \subset L^3_t B^\sigma_{3,\infty}$ with $2-3/p(t) = \sigma$.  This gives the restriction that $\bar a_0 \le 2/3$.

}

\begin{figure}\includegraphics[scale=0.8]{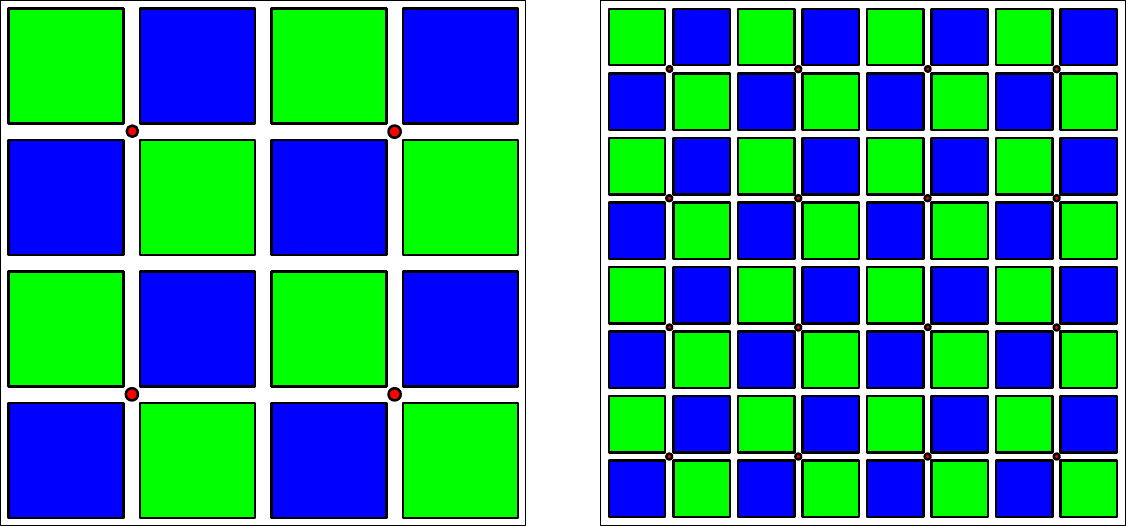} \centering\caption{Data used in the case $a_0>\frac{1}{2}$ of Theorem \ref{thm:main4} (depicted on the unit torus): blue and green represent the regions of positive and negative large-scale vorticity, and red dots represent the support of small-scale vorticity.}\label{fig:setup3}\end{figure}

\subsection{Organization of the paper} The rest of this paper is organized as follows: we first collect the notations and conventions that we use. The entire Section \ref{sec:proof} is devoted to the proofs of the main results. In \ref{subsec:large}, we define the (sequence of) large-scale vorticity and obtain various sharp estimates. In particular, we prove creation of large Lagrangian deformation. Then in \ref{subsec:small}, we explain the setup for the (sequence of) small-scale vorticity and establish sharp upper bounds for them. Finally in \ref{subsec:inviscid}, we perform inviscid limit computations and conclude the proof.

\subsection{Notations and parameters} 

\subsubsection{Notations} 
For the reader's convenience, we collect the notations that will be used frequently in the paper. 
\begin{itemize}
	\item We shall work with the 2D domain $\bbT^2_L = (\bbR/(2L\bbZ))^2$ and $\bbT^3 = (\bbR/(2L\bbZ))^2 \times (\bbR/(2\bbZ))$ where $0<L\le 1$.
	\item Given a scalar-valued function $f : \bbT^2_L \rightarrow \bbR$, we define the $L^p$ norms by \begin{equation*} 
	\begin{split}
	& \nrm{f}_{L^p}^p = \int_{\bbT^2_L} |f|^p dx,\quad 1 \le p < +\infty. 
	\end{split}
	\end{equation*} The case $p = +\infty $ is given by \begin{equation*} 
	\begin{split}
	& \nrm{f}_{L^\infty} = \mathrm{ess-sup}_{x \in \bbT^2_L} |f(x)|. 
	\end{split}
	\end{equation*} 
	\item If $v$ is a vector-valued function $v = (v_1,\cdots, v_d)$, \begin{equation*} 
	\begin{split}
	& \nrm{v}_{L^p}^p := \sum_{i=1}^d \nrm{v_i}_{L^p}^p,\quad 1 \le p < +\infty, \quad \nrm{v}_{L^\infty} := \max_i \nrm{v_i}_{L^\infty}.
	\end{split}
	\end{equation*} 
	\item The homogeneous Sobolev spaces are defined by \begin{equation*} 
	\begin{split}
	& \nrm{v}_{\dot{H}^m} = \nrm{\nabla^mv}_{L^2}
	\end{split}
	\end{equation*} for integers $m \ge 1$, where $\nabla^m v$ is a vector consisting of all possible $m$-th order partial derivatives of $v$. 
	\item The homogeneous H\"older norms are defined by \begin{equation*}
	\begin{split}
	\nrm{f}_{\dot{C}^\alpha} := \sup_{x \ne x'} \frac{|f(x)-f(x')|}{|x-x'|^\alpha}
	\end{split}
	\end{equation*} for $0 < \alpha \le 1$. 
	\item As it is usual, we use the letters $C, c$ to denote various absolute constants whose value can change from a line to another or even within a single line. 
\end{itemize}

\subsubsection{Parameters}
\begin{itemize}
	\item In this paper, $n \rightarrow +\infty$ is a large parameter. We shall use the notation $A \ll B$ (equivalently, $B \gg A$) if the ratio $A/B$ tends to 0 as $n \rightarrow +\infty$, where $A$ and $B$ are positive expressions depending on $n$. Moreover, we use $A \lesssim B$ (equivalently, $B\gtrsim A$) if there is an absolute constant $C>0$ such that $A \le CB$ uniformly for $n \rightarrow +\infty$. Then, we say $A \approx B$ if $A \lesssim B$ and $B\lesssim A$. Finally, we write $A \simeq B$ if $A/B \rightarrow 1$ as $n \rightarrow +\infty$. 
	\item We shall consider the solutions defined on the time interval $[0,\dlt]$, where we take $\dlt>0$ to be smaller whenever it becomes necessary, without explicitly mentioning it. We emphasize   that $\dlt$ is independent of $n$. 
	\item We comment on a few important parameters: $L$, $\ell$, $\tell$, and $\bell$, all of which depend on $n$. We use $L\le 1$ to denote the length-scale of the torus, which we also take to be the length-scale of the large-scale vorticity. The gradient of the large-scale vorticity is taken to be of order $\ell^{-1}$, where $\ell \ll L$. We introduce the convenient notation $\bell := \ell L^{-1}$, which is a non-dimensional parameter. One may simply fix it as $\bell_n = 2^{-n} \rightarrow 0$. Finally, $\tell := \ell^{1+c\dlt}$ ($c>0$ is some small absolute constant) is the length-scale of the small-scale vorticity.  {All the other parameters are determined using $\bell_n$ and $\ell$; for the case of the viscosity constant $\nu_n$, see \eqref{eq:nu_n}.} 
\end{itemize}

\subsection*{Acknowledgements}
 {The authors sincerely thank the anonymous referees for very helpful comments regarding the manuscript, which have been incorporated in the current paper. We especially thank one of the referees for kindly providing us the calculations \ref{subsubsec:energy-diss}, which clarifies the situation.}

We thank Professors A. Mazzucato and T. Drivas for inspiring communications and telling us about the articles \cite{CD} and \cite{DE}, respectively. We are also grateful to Professors P. Constantin and T. Elgindi for valuable comments.

Research of TY  was partially supported by Grant-in-Aid for Young Scientists A (17H04825), Grant-in-Aid for Scientific Research B (15H03621, 17H02860, 18H01136 and 18H01135), Japan Society for the Promotion of Science (JSPS).  IJ has been supported  by a KIAS Individual Grant MG066202 at Korea Institute for Advanced Study, the Science Fellowship of POSCO TJ Park Foundation, and the National Research Foundation of Korea grant No. 2019R1F1A1058486. 

\section{Proofs}\label{sec:proof}

Before we proceed to the description of the sequences of large and small scale vorticities, which will be denoted by $\omega^\calL_n$ and $\omega^\calS_n$, respectively.

\subsection{Setup for the large-scale vorticity}\label{subsec:large}

\begin{figure}\includegraphics[scale=0.6]{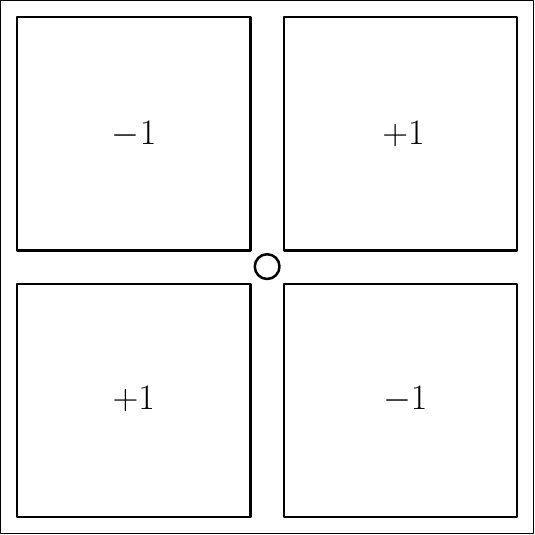} \centering\caption{A diagram showing the support of $\omega^\calL_{n,0}$ (four large squares) and $\omg^\calS_{n,0}$ (circle in the center) in $\bbT^2$.}\label{fig:setup}\end{figure} 

\subsubsection{Estimates for smoothed out Bahouri-Chemin solutions}

Here, we precisely define the smoothed-out Bahouri-Chemin data and prove estimates for the corresponding solutions. For some length-scale $L>0$, we set $\mathbb{T}^2:=\mathbb{R}^2/(2L\mathbb{Z})^2$, and recall that the Bahouri-Chemin solution can be written as $\mathrm{sgn}(x_1)\mathrm{sgn}(x_2)$ where $|x_1|, |x_2| \le L$. Given a length scale $\ell = \ell_n \ll L$, we cut the Bahouri-Chemin solution near the axes as follows: \begin{equation*}
\begin{split}
\tilde{\omega}_n(x_1,x_2) := \mathrm{sgn}(x_1)\mathrm{sgn}(x_2) \chi_{ \{ \ell < |x_1| , |x_2|< L - \ell \} } 
\end{split}
\end{equation*}
Now let $\varphi \in C^\infty_c(\mathbb{R}^2)$ be a standard mollifier;  a radial function whose support is contained in the unit ball. With $\varphi_{\ell}(x) := \ell^{-2}\varphi(\ell^{-1}x)$, we define 
\begin{equation}\label{BC-initial-data}
\omega_{n,0}^{\calL} := \varphi_{\kpp\ell} * \tilde{\omega}_n
\end{equation} for some $0<\kpp \le \frac{1}{2}$. In the following, we shall denote $\omega_n^\calL(t)$ be the unique solutions of the 2D Euler equation defined respectively on $\bbT^2_L$ with initial data $\omega_{n,0}^\calL$.

We now recall a simple estimate of Yudovich (see e.g. \cite{EJ} for a proof): \begin{lemma}
	Let $\omega(t) \in L^\infty([0,\infty): L^\infty(\mathbb{T}^2_L))$ be a solution of the 2D Euler equations, and $\eta(t)$ be the associated flow map. Then for some absolute constant $c > 0$, we have \begin{equation}\label{eq:Yud}
	\begin{split}
	\left(\frac{|x-x'|}{L}\right)^{1+ct\nrm{\omega_0}_{  L^\infty}} \le \frac{|\eta(t,x) - \eta(t,x')|}{L} \le \left(\frac{|x-x'|}{L}\right)^{1-ct\nrm{\omega_0}_{  L^\infty}},
	\end{split}
	\end{equation} for all $0 \le t  $ and $|x-x'| \le L/2$. 
\end{lemma}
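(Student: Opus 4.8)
The plan is to obtain \eqref{eq:Yud} from the classical log-Lipschitz bound on the velocity field generated by a bounded vorticity, by converting it into a scalar differential inequality for the separation of two Lagrangian particles and integrating it. The key preliminary fact is a \emph{uniform-in-time} log-Lipschitz estimate: since $\omega$ is transported by the area-preserving flow $\eta$, we have $\nrm{\omega(t)}_{L^\infty}=\nrm{\omega_0}_{L^\infty}$ for all $t\ge0$; writing $u(t)=K_2*\omega(t)$ and splitting the Biot--Savart integral at the scale $r:=|x-x'|$ --- near the diagonal one uses $|K_2(y)|\aleq|y|^{-1}$, away from it the mean value theorem with $|\nabla K_2(y)|\aleq|y|^{-2}$ --- one obtains an absolute constant $C$ with
\[
|u(t,x)-u(t,x')|\le C\nrm{\omega_0}_{L^\infty}\,r\log\frac{L}{r}\qquad\text{for }0<r\le L/2,
\]
where the additive $O(r)$ produced by the inner cutoff and by the torus size is harmlessly absorbed into $C$ using $\log(L/r)\ge\log2$.

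Next, fix $x\ne x'$ with $r=|x-x'|\le L/2$ and set $z(t):=|\eta(t,x)-\eta(t,x')|$ (distance on $\bbT^2_L$), so $z(0)=r$ and $z$ is locally Lipschitz in $t$. On the interval where $z(t)\le L/2$, the estimate above together with $\rd_t\eta=u(t,\eta)$ gives, for a.e.\ $t$,
\[
\Bigl|\tfrac{d}{dt}z(t)\Bigr|\le |u(t,\eta(t,x))-u(t,\eta(t,x'))|\le C\nrm{\omega_0}_{L^\infty}\,z(t)\log\frac{L}{z(t)} .
\]
Putting $y(t):=\log\frac{L}{z(t)}\ge\log2>0$, this reads $|y'(t)|\le C\nrm{\omega_0}_{L^\infty}\,y(t)$, so Gr\"onwall's inequality (equivalently, comparison with the explicitly solvable ODE $\dot Z=\pm C\nrm{\omega_0}_{L^\infty}Z\log(L/Z)$) yields $y(0)e^{-C\nrm{\omega_0}_{L^\infty}t}\le y(t)\le y(0)e^{C\nrm{\omega_0}_{L^\infty}t}$, i.e.
\[
\bigl(r/L\bigr)^{e^{C\nrm{\omega_0}_{L^\infty}t}}\le z(t)/L\le \bigl(r/L\bigr)^{e^{-C\nrm{\omega_0}_{L^\infty}t}} .
\]
The upper bound here also shows $z(t)$ stays below $L/2$ for a time of order at least $\nrm{\omega_0}_{L^\infty}^{-1}\log\log(L/r)$, which for the polynomially small separations arising in the applications covers all of $[0,\dlt]$.

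It remains to linearize the exponent. Since $r/L\le1/2<1$, from $e^{-s}\ge1-s$ we get $\bigl(r/L\bigr)^{e^{-C\nrm{\omega_0}_{L^\infty}t}}\le\bigl(r/L\bigr)^{1-C\nrm{\omega_0}_{L^\infty}t}$, which is the right-hand inequality of \eqref{eq:Yud}; and on any bounded range of $\nrm{\omega_0}_{L^\infty}t$ --- the only regime ever used, since $\nrm{\omega_{n,0}^\calL}_{L^\infty}\aleq1$ and $t\le\dlt$ --- one has $e^{C\nrm{\omega_0}_{L^\infty}t}\le1+c\nrm{\omega_0}_{L^\infty}t$ for an absolute $c$, which gives the left-hand inequality after renaming the constant. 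The only genuinely nontrivial input is the log-Lipschitz estimate of the first step, and the one technical point to watch is that the modulus must carry the torus scale $L$ inside the logarithm (no extra multiplicative factor in the base, which would not be absorbable into $c$) and be replaced by a monotone concave majorant over the full range of separations arising along the flow, so that the ODE comparison is legitimate; everything else is routine, and we refer to \cite{EJ} for the details.
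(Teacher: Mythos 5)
Your argument is correct, and it is the standard Yudovich/Wolibner derivation (log-Lipschitz bound for the Biot--Savart velocity of a bounded, transported vorticity, followed by the Gr\"onwall comparison for $\log(L/|\eta(t,x)-\eta(t,x')|)$); the paper itself offers no proof of this lemma and simply defers to \cite{EJ}, so there is no competing argument to compare against, and yours is exactly the expected one. The one substantive point, which you already flag honestly, is that the sharp conclusion carries the exponent $e^{\pm Ct\nrm{\omega_0}_{L^\infty}}$, and the left-hand inequality of \eqref{eq:Yud} with the linear exponent $1+ct\nrm{\omega_0}_{L^\infty}$ follows from it only on a bounded range of $t\nrm{\omega_0}_{L^\infty}$ (no choice of $c$ beats the exponential for all times), so the lemma's literal ``for all $0\le t$'' should be understood in the regime $t\le \dlt/\nrm{\omega_0}_{L^\infty}$ actually used in the paper; likewise the bootstrap keeping the particle separation below $L/2$ is only needed, and only available, for the small separations arising in the applications.
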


We now take a ``small ball'' region \begin{equation}\label{eq:needle} 
\begin{split}
& \calD =  \{ |x| < \tell\}
\end{split}
\end{equation} where $0< \tell \ll \ell$. The following lemma establishes a sharp estimate for the velocity gradient inside this region.


\begin{lemma}\label{lem:nabla-u-local}
	Let $\omega_n(t)$ be the unique solution to the 2D Euler equations with initial data $\omega^\calL_{n,0}$ given in \eqref{BC-initial-data}. We define the corresponding velocity field by $u^\calL_n(t)$. There exists some constant $c>0$ such that {for any $\dlt>0$,} if $\tell$ satisfies $\tell \le c\ell \bell^{ c\dlt}$, then we have \begin{equation}\label{eq:nabla-u-local}
	\begin{split}
	\rd_1u^\calL_{n,1}(t,x) \ge \frac{2}{\pi}\nrm{\omega_0^\calL}_{  L^\infty}\left((1-C\dlt - \eps_n)\ln \frac{1}{\bell}\right),
	\end{split}
	\end{equation} \begin{equation}\label{eq:nabla-u-local2}
	\begin{split}
	\rd_1u_{n,1}^\calL(t,x) \le \frac{2}{\pi}\nrm{\omega_0^\calL}_{  L^\infty}\left((1 + C\dlt + \eps_n)\ln \frac{1}{\bell}\right),  
	\end{split}
	\end{equation}  and \begin{equation}\label{eq:d1u2} 
	\begin{split}
	& |\rd_2 u_{n,1}^\calL(t,x)| + |\rd_1 u_{n,2}^\calL(t,x)| \le C \nrm{\omega^\calL_0}_{L^\infty}
	\end{split}
	\end{equation} for {$$(t,x) \in \left[0,\frac{\delta}{\nrm{\omega_0}_{L^\infty}} \right]\times \calD,$$} where $\eps_n \rightarrow 0$ as $n \rightarrow +\infty$. 
\end{lemma}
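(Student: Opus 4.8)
The plan is to reduce the computation of $\rd_1 u^\calL_{n,1}(t,x)$ inside the tiny region $\calD$ to an explicit integral against the vorticity via the Kiselev–Sverak ``Key Lemma'' structure. The first step is to record that $\omega_n^\calL(t)$ remains odd with respect to both coordinate axes (since this symmetry is preserved by 2D Euler and the smoothed Bahouri–Chemin datum possesses it), and bounded by $\nrm{\omega_0^\calL}_{L^\infty}$ in $L^\infty$ uniformly in $t$. For $x \in \calD$, i.e.\ $|x|<\tell$ with $\tell$ extremely small, one writes the Biot–Savart kernel $K_2 = \tfrac{1}{2\pi} x^\perp/|x|^2$ and differentiates under the integral sign. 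Using the double oddness of $\omega$, the four-fold reflected contributions combine so that the leading part of $\rd_1 u^\calL_{n,1}(t,x)$ is, up to lower-order terms,
\[
\frac{1}{\pi}\int_{Q} \frac{y_1 y_2}{|y|^4}\,\omega_n^\calL(t,y)\,\ud y,
\]
where $Q$ is the first quadrant part of $\bbT^2_L$ (this is exactly the Kiselev–Sverak kernel $K(y) = \tfrac{y_1y_2}{|y|^4}$). The small size of $\calD$ relative to $\ell$ is what makes the $x$-dependence of this expression negligible: the error from replacing $x$ by $0$ is controlled by $\tell/\ell$ times logarithmic factors, which is where the hypothesis $\tell \le c\ell\bell^{c\dlt}$ enters.

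Next I would evaluate the principal integral $\int_Q \tfrac{y_1y_2}{|y|^4}\,\omega_n^\calL(t,y)\,\ud y$. Here is where the Lagrangian/Yudovich estimate \eqref{eq:Yud} does the work: since $\omega_n^\calL(t,\eta_n(t,y)) = \omega_{0,n}^\calL(y)$ and $\omega_{0,n}^\calL \approx \sgn(y_1)\sgn(y_2)$ outside an $O(\ell)$-neighborhood of the axes and of the outer boundary, one changes variables $y = \eta_n(t,z)$ and uses that $\int_Q \tfrac{y_1y_2}{|y|^4}\,\ud y$ over an annulus $\{a<|y|<b\}\cap Q$ equals $\tfrac{1}{2}\ln(b/a)$. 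The dominant annulus is $\{\kpp\ell \lesssim |y| \lesssim L\}$ (inside this range $\omega_n^\calL \approx \nrm{\omega_0^\calL}_{L^\infty}$ on $Q$ up to the cut regions), giving a main term $\approx \tfrac{1}{2}\ln(L/\ell) = \tfrac{1}{2}\ln(1/\bell)$, hence $\rd_1 u^\calL_{n,1} \approx \tfrac{1}{\pi}\nrm{\omega_0^\calL}_{L^\infty}\ln(1/\bell)$. Wait — the claimed constant is $\tfrac{2}{\pi}$, so I would be more careful about the reflection combinatorics on the torus (the periodic Biot–Savart law produces extra image terms, and the two ``near-axis'' quadrant contributions reinforce rather than cancel), which doubles the coefficient. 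The corrections: (i) the cut region near the axes $\{|y_1|<\ell\}\cup\{|y_2|<\ell\}$ contributes $O(\nrm{\omega_0^\calL}_{L^\infty})$ since $\int \tfrac{|y_1y_2|}{|y|^4}$ over $\{|y_1|<\ell, |y|<L\}$ is $O(1)$ — this is harmless; (ii) the distortion of the annulus under $\eta_n$ over the time interval $t \le \dlt/\nrm{\omega_0}_{L^\infty}$ changes $\ln(1/\bell)$ by a multiplicative factor $1 \pm C\dlt$ by \eqref{eq:Yud}; (iii) the region near $|y|\approx L$ where periodicity matters contributes $O(\nrm{\omega_0^\calL}_{L^\infty})$. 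Collecting (i)–(iii) and the $x$-dependence error gives precisely the two-sided bound \eqref{eq:nabla-u-local}–\eqref{eq:nabla-u-local2} with the $(1 \mp C\dlt \mp \eps_n)$ factors, where $\eps_n$ absorbs the $\bell$-dependent and $\tell/\ell$-dependent errors that vanish as $n\to\infty$.

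For the off-diagonal bound \eqref{eq:d1u2}, the point is that $\rd_2 u^\calL_{n,1}$ and $\rd_1 u^\calL_{n,2}$ have Biot–Savart kernels that are \emph{even} in one variable and odd in the other, so against the doubly-odd vorticity the logarithmically-divergent part cancels, leaving a bounded principal value: one gets $|\rd_2 u_{n,1}^\calL| + |\rd_1 u_{n,2}^\calL| \lesssim \nrm{\omega_0^\calL}_{L^\infty}$ by the standard Calderón–Zygmund-with-log bound $\nrm{\nb u}_{L^\infty} \lesssim \nrm{\omega}_{L^\infty}(1 + \ln(\cdots))$, but the logarithm is killed precisely by the parity, or alternatively by noting these components vanish on the axes by symmetry and using the $C^\alpha$ modulus of continuity of $\omega$ near $\calD$ together with the Schauder-type estimate. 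I expect the \textbf{main obstacle} to be step two: getting the constant $\tfrac{2}{\pi}$ exactly right, which requires honest bookkeeping of the periodic images and of the four reflected copies of the first-quadrant integral, and carefully showing that every correction term is either $O(\nrm{\omega_0^\calL}_{L^\infty})$ (absorbed into $C\dlt \cdot \ln(1/\bell)$ since $\ln(1/\bell)\to\infty$) or $o(\ln(1/\bell))$ (absorbed into $\eps_n \ln(1/\bell)$). The uniformity of all these errors over $t \in [0,\dlt/\nrm{\omega_0}_{L^\infty}]$ and over $x\in\calD$ follows from \eqref{eq:Yud} and the smallness $\tell \le c\ell\bell^{c\dlt}$.
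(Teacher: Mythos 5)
Your proposal follows essentially the same route as the paper: double-odd symmetry reduces $\rd_1u^\calL_{n,1}$ to the Kiselev--Sverak integral $\int y_1y_2|y|^{-4}\omega\,dy$ over a quadrant, the Yudovich estimate \eqref{eq:Yud} localizes where $\omega(t)$ equals $\pm\nrm{\omega_0}_{L^\infty}$ and where it vanishes (yielding the $\tfrac12\ln(1/\bell)$ annulus integral which, after the factor $4$ from the quadrant reflections of the $\tfrac1\pi$-kernel, produces exactly the constant $\tfrac{2}{\pi}$ you were worried about --- the periodic images only contribute $O(\nrm{\omega_0}_{L^\infty})$), and the off-diagonal components are handled by their vanishing on the axes plus a modulus-of-continuity argument, just as in the paper. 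The one ingredient you leave implicit is how the $x$-dependence over $\calD$ is quantified: the paper propagates $\nrm{\omega(t)}_{\dot C^1}$ via the Kiselev--Sverak estimate, interpolates to get $\nrm{\nb u(t)}_{\dot C^{1/2}}\lesssim L^{-1/2}(L/(\kpp\ell))^{1/2+C\dlt}\nrm{\omega_0}_{L^\infty}$, and applies this on the ball of radius $\tell$ --- which is precisely where the hypothesis $\tell\le c\ell\bell^{c\dlt}$ enters, so you should make that chain explicit rather than asserting a $\tell/\ell$ error heuristically.
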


\noindent In the proof, we fix some $0 < \ell$ sufficiently smaller than $L$ and omit the indices $\calL$ and $n$.


\begin{proof} 
	We begin by noting that $\omega_{n,0}$ is odd with respect to both axes, $\omega_{n,0} = 1$ on $[(1+\kpp)\ell,L-(1+\kpp)\ell]^2$, and vanishes on $ [0,L]^2 \backslash [(1-\kappa)\ell, L - (1-\kappa)\ell]^2  $. We claim that for small $\delta > 0$, 
\begin{equation}\label{eq:claim}
	\begin{split}
	\omega(t,x) \equiv \nrm{\omega_0}_{  L^\infty} \quad \mbox{on}\quad (t,x) \in \left[0,\frac{\delta}{\nrm{\omega_0}_{L^\infty}} \right]\times \left[ (1+\kpp)\ell \left(\frac{L}{\ell}\right)^{c\delta},\frac{L}{2}  \right]^2. 
	\end{split}
	\end{equation} 
To show this, it suffices to observe that fluid particles starting from $\partial([(1+\kpp)\ell, L-(1+\kpp)\ell]^2 )$ cannot reach the internal square $( (1+\kpp)\ell \left(\frac{L}{\ell}\right)^{c\delta}, L -(1+\kpp)\ell \left(\frac{L}{\ell}\right)^{c\delta}  )^2$ within time $\delta/\nrm{\omega_n}_{L^\infty} $. For this we need to consider four sides of this internal square. We shall only consider the left side, as the other sides can be treated in a similar way. To this end, take a point of the form $x = ((1+\kpp)\ell,a)$ for some $(1+\kpp)\ell \le a \le L - (1+\kpp)\ell$. Setting $x' = (0,a)$ and applying \eqref{eq:Yud}, we obtain \begin{equation*}
\begin{split}
|\eta_1(t,x) - \eta_1(t,x')| \le L \left( \frac{(1+\kpp)\ell}{L} \right)^{1- c\delta }
\end{split}
\end{equation*} for all $0 \le t \le \delta/\nrm{\omega_n}_{L^\infty}$. Since $\eta_1(t,x') = 0$ (by odd symmetry) and $\eta_1(t,x) > 0$ for all $t$, we deduce that $\eta_1(t,x) \le (1+\kpp)\ell \left(\frac{L}{\ell}\right)^{c\delta}$. Applying a similar argument to the other pieces of the boundary, we deduce \eqref{eq:claim}. A completely parallel argument, but instead using the lower bound in \eqref{eq:Yud} rather than the upper bound, gives that \begin{equation}\label{eq:claim2}
\begin{split}
\omega(t,x) \equiv 0 \quad \mbox{on} \quad (t,x) \in \left[0,\frac{\delta}{\nrm{\omega_0}_{L^\infty}} \right]\times    [0,L]^2 \backslash    \left[ (1-\kappa)\ell \left(\frac{L}{\ell}\right)^{-c\delta}, L - (1-\kappa)\ell \left(\frac{L}{\ell}\right)^{-c\delta}  \right]^2. 
\end{split}
\end{equation}

From now on we shall restrict to $t \in [0,\delta/\nrm{\omega_n}_{L^\infty}]$, and recall explicit formulas \begin{equation*}
\begin{split}
\rd_1 u_1(t,x_1,x_2) = \frac{1}{\pi} \int_{\mathbb{R}^2} \frac{(y_1-x_1)(y_2-x_2)}{|y-x|^4} \omega(y) dy = - \rd_2 u_2(t,x_1,x_2) 
\end{split}
\end{equation*}  where we have extended $\omega$ to $\mathbb{R}^2$ by periodicity and the integral is defined in the sense of principal value. Moreover, assuming for simplicity that $(x_1,x_2)$ does not belong to the support of $\omega(t)$, \begin{equation*} 
\begin{split}
\rd_1 u_2(t,x_1,x_2) = \frac{1}{2\pi} \int_{\mathbb{R}^2} \frac{(y_1-x_1)^2-(y_2-x_2)^2}{|y-x|^4} \omega(y) dy = - \rd_2 u_1(t,x_1,x_2). 
\end{split}
\end{equation*} We now take $0 \le x_1, x_2 < \frac{L}{2}$ and observe the uniform bounds \begin{equation*} 
\begin{split}
& \left| \int_{\mathbb{R}^2 \backslash [-L,L]^2 }  \frac{(y_1-x_1)(y_2-x_2)}{|y-x|^4} \omega(y) dy \right| + \left| \int_{\mathbb{R}^2 \backslash [-L,L]^2 }  \frac{(y_1-x_1)^2-(y_2-x_2)^2}{|y-x|^4} \omega(y) dy \right| \lesssim \nrm{\omega_0}_{L^\infty}. 
\end{split}
\end{equation*} (This is elementary but see for instance \cite{Z} for a proof.)   We are ready to prove the claimed estimates. We proceed in several steps: 
	
	\medskip
	
	\noindent \textit{Step 1. Lower bound of $\rd_1u_1$}
	
	\medskip 
	
	\noindent We now estimate $\rd_1u_1(t)$. In view of the previous bound, we restrict the integral to $[-L,L]^2$ and then to $[0,L] \times [0,L]$ owing to the odd symmetry: \begin{equation*} 
	\begin{split}
	&\left| \rd_1 u_1 (t,x)|_{x=0} -\frac{2}{\pi} \int_{ [0,L] \times [0,L] } \frac{ y_1y_2}{(y_1^2 + y_2^2)^2} \omega(t,y) dy \right| \lesssim  \nrm{\omega_0}_{L^\infty},
	\end{split}
	\end{equation*} where the constant is independent of $L$.
Let 
\begin{equation*}
I:=\frac{2}{\pi} \int_{ [0,L] \times [0,L] } \frac{ y_1y_2}{(y_1^2 + y_2^2)^2} \omega(t,y) dy
\end{equation*}
and note that the integrand is non-negative. Using \eqref{eq:claim}, we obtain a simple lower bound on $I$: \begin{equation*}
\begin{split}
I \ge \frac{2}{\pi}\nrm{\omega_0}_{  L^\infty} \int_{ \left[ (1+\kpp)\ell \left(\frac{L}{\ell}\right)^{c\delta},\frac{L}{2}  \right]^2 } \frac{ y_1y_2}{(y_1^2 + y_2^2)^2} dy.
\end{split}
\end{equation*} This immediately gives \begin{equation}\label{eq:d1u1-lower}
	\begin{split}
	\rd_1u_1(t,x)|_{x=0} \ge \frac{2}{\pi}\nrm{\omega_0}_{  L^\infty}\left((1-\eps_n)\ln \frac{1}{\bell}  \right). 
	\end{split}
	\end{equation}

	\medskip
	
	\noindent \textit{Step 2. Upper bound of $\rd_1u_1$ along the $x_1$-axis.}
	
	\medskip 
	
	\noindent This time, we obtain an upper bound for $\rd_1u_1$ at the origin. We obtain a simple upper bound on the integrals $I$ by replacing $\omega(y)$ with $\nrm{\omega_0}_{L^\infty}$ in the region $\omega(y)>0$ ($\omega(y)<0$, resp.).  We obtain that \begin{equation} \label{eq:d1u1-upper}
	\begin{split}
	\rd_1 u_1(t,0,x_2) \le \frac{2}{\pi}  \nrm{\omega_0}_{L^\infty}\left( ( 1  + \eps_n) \ln \frac{1}{\bell} \right). 
	\end{split}
	\end{equation}
	
	\medskip
	
	\noindent \textit{Step 3. Bounds on $\rd_1u_1$ in the small ball region.}
	
	\medskip 
	
	\noindent In order to estimate $\partial_1u_1$  not only on the axis but also inside the small ball region, we shall use the classical estimates for the 2D Euler solutions: \begin{equation*}
	\begin{split}
	1 + \log\left( 1  + \frac{ L\nrm{\omega(t)}_{\dot{C}^{1}} }{\nrm{\omega_0}_{L^\infty}} \right) \le  \left( 1 + \log\left( 1  +  \frac{L \nrm{\omega_0}_{\dot{C}^{1}} }{\nrm{\omega_0}_{L^\infty}}  \right) \right) \exp(C\nrm{\omega_0}_{L^\infty}t) 
	\end{split}
	\end{equation*} (cf. \cite[Theorem 2.1]{KS}).  Since $\nrm{\omega_0}_{\dot{C}^1} \lesssim (\kpp\ell)^{-1}\nrm{\omega_0}_{L^\infty},$ we obtain that \begin{equation*} 
	\begin{split}
	& \log(1 +  \frac{ L\nrm{\omega(t)}_{\dot{C}^{1}} }{\nrm{\omega_0}_{L^\infty}} ) \le \log(L (\kpp\ell)^{-1}  ) e^{C\dlt} \le \log( L(\kpp\ell)^{-1} )^{1 + C\dlt}
	\end{split}
	\end{equation*} for $\kpp\ell \ll 1$ and $\dlt > 0$ small. Hence \begin{equation}\label{eq:omega-C1}
	\begin{split}
	\nrm{\omega(t)}_{\dot{C}^1} \le \frac{C}{L} \left( \frac{L}{\kpp \ell} \right)^{1+C\dlt } \nrm{\omega_0}_{L^\infty}. 
	\end{split}
	\end{equation} Then we use the singular integral estimate \begin{equation*} 
	\begin{split}
	& \nrm{\nb u(t)}_{\dot{C}^{\frac{1}{2}}} \lesssim \nrm{\omega(t)}_{\dot{C}^{\frac{1}{2}}} \lesssim L^{\frac{1}{2}} \nrm{\omega(t)}_{\dot{C}^1}^{\frac{1}{2}} \nrm{\omega_0}_{L^\infty}^{\frac{1}{2}} \lesssim L^{-\frac{1}{2}} \left( \frac{L}{\kpp \ell} \right)^{\frac{1}{2}+C\dlt } \nrm{\omega_0}_{L^\infty}. 
	\end{split}
	\end{equation*}  We then obtain for $x = (x_1,x_2) \in \calD$ (recall the definition of $\calD$ from \eqref{eq:needle}), \begin{equation}\label{eq:velgrad-Holder}
	\begin{split}
	|\nabla u(t,x) - \nabla u(t,0)| \lesssim \left( \frac{\tell}{L}\right)^{\frac{1}{2}}  \left( \frac{L}{\kpp \ell} \right)^{\frac{1}{2}+C\dlt } \nrm{\omega_0}_{L^\infty}. 
	\end{split}
	\end{equation} Therefore, we conclude that as long as $\tell$ is chosen in a way that \begin{equation}\label{eq:tell} 
	\begin{split}
	& \tell \lesssim (\kpp\ell)^{1+c\dlt}L^{-c\dlt},
	\end{split}
	\end{equation}  the  same lower and upper bounds for $\rd_1 u_1(t)$ given in \eqref{eq:d1u1-lower} and \eqref{eq:d1u1-upper} holds for $x = (x_1,x_2)$ (possibly with larger $\eps_n>0$).

	\medskip
	
	\noindent \textit{Step 4. Bounds on $\rd_1u_2$ and $\rd_2u_1$ in the small ball region.}
	
	\medskip 
	
	\noindent Along the axis $x_1 = 0$, we have vanishing of $u_1(t)$ from the odd symmetry for all $t$. In particular, taking a $x_2$-derivative, we also have that $\rd_2 u_1(t,0,x_2) = \rd_1 u_2(t,0,x_2) = 0$ for all $x_2$. Applying \eqref{eq:velgrad-Holder} under the condition \eqref{eq:tell} ensures that for $x \in \calD$, \begin{equation*} 
	\begin{split}
	& |\rd_2 u_1(t,x)| + |\rd_1 u_2(t,x)| \le C \nrm{\omega_0}_{L^\infty}.  
	\end{split}
	\end{equation*}
	
	\noindent The proof is now complete. 
\end{proof}

\begin{lemma}\label{lem:velgrad-global}
	Under the same assumptions in Lemma \ref{lem:nabla-u-local}, we have \begin{equation}\label{eq:nabla-u-global}
	\begin{split}
	\nrm{\rd_1u^\calL_{ {{n,1}}}(t)}_{L^\infty} 
&\le \frac{2}{\pi}\nrm{\omega_{n,0}^\calL}_{L^\infty} \left( (1 + \eps_n) \ln \frac{1}{\bell} + C \ln\left((1+\kpp) \left(\frac{L}{\ell}\right)^{c\dlt} \right) \right)\\
&
\lesssim
\|\omega_{n,0}^{\mathcal{L}}\|_{L^\infty}(1+\epsilon_n)\ln\frac{1}{\bar \ell}. 
	\end{split}
	\end{equation} Moreover, \begin{equation}\label{eq:nabla-u-global2}
	\begin{split}
	\nrm{\rd_2u^\calL_{ {{n,1}}}(t)}_{L^\infty}  + \nrm{\rd_1u^\calL_{ {{n,2}}}(t)}_{L^\infty}  \le C \dlt \nrm{\omega_{n,0}^\calL}_{L^\infty}  \ln \frac{1}{\bell}. 
	\end{split}
	\end{equation}
\end{lemma}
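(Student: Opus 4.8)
The plan is to bootstrap from the local estimates of Lemma \ref{lem:nabla-u-local} to a global $L^\infty$ bound by exploiting the transport structure \eqref{eq:claim}--\eqref{eq:claim2}: at every time $t\in[0,\dlt/\nrm{\omega_0}_{L^\infty}]$, the vorticity $\omega(t)$ is still identically $\pm\nrm{\omega_0}_{L^\infty}$ on a slightly shrunken Bahouri-Chemin square and identically $0$ near the axes, with the ``transition zone'' of width comparable to $(1+\kpp)\ell (L/\ell)^{c\dlt}$. So the velocity gradient at an arbitrary point $x\in\bbT^2_L$ (not just $x\in\calD$) can be estimated by the same principal-value integral $\rd_1u_1(t,x)=\frac1\pi\,\mathrm{p.v.}\int \frac{(y_1-x_1)(y_2-x_2)}{|y-x|^4}\omega(t,y)\,dy$, splitting into the far region $\bbR^2\setminus[-L,L]^2$ (bounded by $C\nrm{\omega_0}_{L^\infty}$ as already noted in the proof of Lemma \ref{lem:nabla-u-local}) and the near region $[-L,L]^2$.

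First I would treat the main term. On $[-L,L]^2$, since $|\omega(t,y)|\le\nrm{\omega_0}_{L^\infty}$ everywhere and $\omega(t,y)$ has the sign pattern of $\sgn(y_1)\sgn(y_2)$, and since for $|x|$ bounded away from $0$ one is integrating a kernel with an integrable singularity, the worst case is when $x$ is near the origin — which is exactly the case already handled by Step 2 of Lemma \ref{lem:nabla-u-local}, giving $\frac2\pi\nrm{\omega_0}_{L^\infty}(1+\eps_n)\ln\frac1{\bell}$. For general $x$ one gets an extra contribution from the region within distance $\lesssim(1+\kpp)\ell(L/\ell)^{c\dlt}$ of $x$ where the kernel can be large but $|\omega|\le\nrm{\omega_0}_{L^\infty}$; integrating $\frac{|y_1-x_1||y_2-x_2|}{|y-x|^4}$ over an annulus of inner radius $\sim 0$ produces at most a logarithm of the ratio of scales, namely $C\ln((1+\kpp)(L/\ell)^{c\dlt})=C\ln((1+\kpp)\bell^{-c\dlt})$, which is $\lesssim C\dlt\ln\frac1{\bell}$ and hence absorbed into the $(1+\eps_n)\ln\frac1{\bell}$ factor for $\dlt$ small. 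This gives both lines of \eqref{eq:nabla-u-global}; the second line follows since the additive $\ln((1+\kpp)(L/\ell)^{c\dlt})$ term is $O(\dlt\ln\frac1{\bell})=o(\ln\frac1{\bell})$ (and $\eps_n$ absorbs the $O(1)$ part).

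For \eqref{eq:nabla-u-global2}, the point is that $\rd_1u_2$ and $\rd_2u_1$ involve the kernel $\frac{(y_1-x_1)^2-(y_2-x_2)^2}{|y-x|^4}$ which, unlike the $\rd_1u_1$ kernel, is \emph{odd} under swapping the two coordinate axes. Using the near-exact symmetry of $\omega(t)$ — it is odd in each $x_i$ and, crucially, is approximately invariant under $(y_1,y_2)\mapsto(y_2,y_1)$ up to the transition zone — the leading $\ln\frac1{\bell}$ contributions from the four quadrants cancel, leaving only the error from the regions where the $(y_1\leftrightarrow y_2)$ symmetry of $\omega$ fails, i.e. the transition strips of relative width $\bell^{1-c\dlt}$ near the axes and near the outer boundary of the square. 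Bounding the integral of the Calderón--Zygmund kernel over these strips again yields at most a logarithm of the width ratio, i.e. $\lesssim\dlt\ln\frac1{\bell}\,\nrm{\omega_0}_{L^\infty}$, which is precisely \eqref{eq:nabla-u-global2}. The main obstacle I anticipate is making the cancellation in the $\rd_1u_2,\rd_2u_1$ estimate rigorous and uniform in $x\in\bbT^2_L$: one must carefully pair up the four quadrants and control the principal-value integral near $y=x$ when $x$ itself lies inside or near the transition zone, where neither the ``$\omega$ is constant'' nor the ``$\omega$ vanishes'' description applies. I would handle that borderline case by the crude bound $|\nb u(t)|\le\nrm{\nb u(t)}_{L^\infty}\lesssim\nrm{\omega(t)}_{L^\infty}(1+\log(1+L\nrm{\omega(t)}_{\dot C^1}/\nrm{\omega_0}_{L^\infty}))$ together with the $\dot C^1$ growth bound \eqref{eq:omega-C1}, which already gives $\lesssim\dlt\ln\frac1{\bell}\cdot\nrm{\omega_0}_{L^\infty}$ after taking logarithms — consistent with the claimed bounds — so no separate cancellation argument is needed on that small set.
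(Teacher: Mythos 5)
Your overall roadmap (far field, main annulus, transition strips, with the structure of $\omega(t)$ frozen by \eqref{eq:claim}--\eqref{eq:claim2}) matches the paper's, but two steps as written would fail. First, for \eqref{eq:nabla-u-global} you never actually control the principal-value singularity at $y=x$ for a general $x$: integrating $|y_1-x_1|\,|y_2-x_2|/|y-x|^4$ over an ``annulus of inner radius $\sim 0$'' does not produce a logarithm of a ratio of scales --- it diverges. The paper cuts the integral at inner radius $\nrm{\omega_0}_{L^\infty}\nrm{\omega(t)}_{\dot{C}^1}^{-1}\gtrsim L(\kpp\bell)^{1+C\dlt}$ and, on the innermost disc, uses the cancellation $\int K(y-x)\,(\omega(t,y)-\omega(t,x))\,dy$ together with \eqref{eq:omega-C1} to get an $O(\nrm{\omega_0}_{L^\infty})$ contribution; the middle annulus then yields $\tfrac{2}{\pi}\ln(R/r)\le \tfrac{2}{\pi}\big((1+C\dlt)\ln\tfrac{1}{\kpp\bell}+C\big)$, which is exactly the stated bound with the sharp constant $\tfrac{2}{\pi}$ --- a constant that matters downstream since it fixes $\calE\approx\bell^{-\frac{2}{\pi}(1+c\dlt)}$. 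You have \eqref{eq:omega-C1} in hand but only deploy it through the crude logarithmic bracket, which cannot recover this constant; and the assertion that ``the worst case is $x$ near the origin'' is plausible but unproved, whereas the paper's brute-force annulus bound makes it unnecessary.

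Second, and more seriously, your treatment of \eqref{eq:nabla-u-global2} breaks at exactly the point you identify as the obstacle. The fallback bound $\nrm{\nb u(t)}_{L^\infty}\lesssim \nrm{\omega_0}_{L^\infty}\big(1+\log(1+L\nrm{\omega(t)}_{\dot{C}^1}/\nrm{\omega_0}_{L^\infty})\big)$ combined with \eqref{eq:omega-C1} gives $\lesssim (1+C\dlt)\ln\tfrac{1}{\kpp\bell}\cdot\nrm{\omega_0}_{L^\infty}$, i.e.\ a full $\ln\tfrac{1}{\bell}$, \emph{not} $\dlt\ln\tfrac{1}{\bell}$; the factor $\dlt$ does not appear where you claim it does, and that factor is the entire content of \eqref{eq:nabla-u-global2} (it is what makes $\nb u_n^\calL$ almost diagonal in \eqref{eq:nb-omega-Lp}, \eqref{eq:small-omega-Lp} and throughout the inviscid-limit section). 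Moreover, the quadrant cancellation you invoke is only exact when $x$ lies on the diagonal: the swap symmetry of $\omega(t)$ is a symmetry about the origin, while the antisymmetry of the kernel $((y_1-x_1)^2-(y_2-x_2)^2)/|y-x|^4$ is about the base point $x$. The paper instead subtracts the exact Bahouri--Chemin solution, for which uniform boundedness of $\rd_2u_1$, $\rd_1u_2$ is a known (nontrivial) fact, reducing matters to the strips $S_1,S_2$; there the decomposition is into (i) the disc $|x-y|\le L\bell^{1+c\dlt}$, handled again by the $\dot{C}^1$ cancellation and giving $O(\nrm{\omega_0}_{L^\infty})$ rather than a logarithm, (ii) the annulus up to $L\bell^{1-c\dlt}$, which produces the $\dlt\ln\tfrac{1}{\bell}$, and (iii) the remainder of the strip, controlled by the absolutely convergent rectangular integral $\int_1^{\bell^{-(1-c\dlt)}}(1+z_2^2)^{-1}dz_2$. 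Pieces (i) and (iii) are missing from your argument, and piece (i) is precisely the borderline case your crude bound fails to handle at the required order.
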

\begin{proof}
	We first prove \eqref{eq:nabla-u-global}. From the explicit formula \begin{equation*}
	\begin{split}
	\rd_1 u_1(t,x) = \frac{1}{\pi} P.V. \int_{\mathbb{R}^2} \frac{(y_1-x_1)(y_2-x_2)}{|y-x|^4} \omega(t,y) dy, 
	\end{split}
	\end{equation*}  we divide the integral into three regions: (i) $|x-y| < \nrm{\omega_0}_{L^\infty} \nrm{ \omega(t) }^{-1}_{\dot{C}^1}$, (ii)  $\frac{L}{2} \ge |x-y| \ge \nrm{\omega_0}_{L^\infty} \nrm{ \omega(t) }^{-1}_{\dot{C}^1}$, (iii) $|x-y| > \frac{L}{2}$. In (iii), the integral can be estimated by $C\nrm{\omega_0}_{L^\infty}$, and one estimates the integral in (ii) as in the proof of Lemma \ref{lem:nabla-u-local}, which gives the expression in \eqref{eq:nabla-u-global}, recalling the bound \begin{equation*} 
	\begin{split}
	& \frac{L\nrm{ \omega(t) }_{\dot{C}^1} }{\nrm{\omega_0}_{L^\infty}} \le C(\kpp\bell)^{-(1+c\dlt)}. 
	\end{split}
	\end{equation*}  Lastly, in the region (i), we write \begin{equation*} 
	\begin{split}
	\left| \int_{|x-y| < \frac{\nrm{\omega_0}_{L^\infty} }{\nrm{ \omega(t) }_{\dot{C}^1} }} \frac{(y_1-x_1)(y_2-x_2)}{|y-x|^4}( \omega(t,y) - \omega(t,x) ) dy \right| & \lesssim \nrm{ \omega(t) }_{\dot{C}^1} \int_{|x-y| < \frac{\nrm{\omega_0}_{L^\infty} }{\nrm{ \omega(t) }_{\dot{C}^1} }}  \frac{1}{|x-y|} dy \lesssim C\nrm{\omega_0}_{L^\infty}.
	\end{split}
	\end{equation*} This concludes the proof. 
	
	Turning to \eqref{eq:nabla-u-global2}, it suffices to show the estimate for $\rd_2u_1$ only. We do this again by estimating the explicit form of the singular integral kernel. However, using the fact that $\rd_2u_1$ is uniformly bounded when $\omega$ is given exactly by the Bahouri-Chemin stationary solution (this can be shown using either Fourier series with Poisson summation formula or radial-angular decomposition; cf. \cite{EJSVP,D,Den}), we just need to estimate the part where $\omega_n^\calL(t)$ is different from the Bahouri-Chemin solution. Moreover, without loss of generality we take $x = (x_1,x_2)$ with $0 \le x_1 \le x_2 \le \frac{L}{2}$ and we need to show a bound on the following: \begin{equation*} 
	\begin{split}
	& \int_{ \{y : |\omega_n^\calL(t,y)| \ne \nrm{\omega_n^\calL}_{L^\infty} \} }   \frac{(y_1-x_1)^2 - (y_2-x_2)^2}{|y-x|^4} \omega_n^\calL(t,y) dy.
	\end{split}
	\end{equation*} From our assumption that $x$ lies in the first quadrant, the main term in the integral comes from the strips $S_1 = \{ 0 \le x_1 \le L, 0 \le x_2 \le L \bell^{1-c\dlt} \}$ and $S_2 = \{  0 \le x_2 \le L, 0 \le x_1 \le L \bell^{1-c\dlt} \}$. We shall further assume that $x$ belongs to $S_1$ since otherwise then the kernel becomes less singular (and a similar argument gives the same bound). Then, we estimate \begin{equation*} 
	\begin{split}
	&  \int_{S_1}  \frac{(y_1-x_1)^2 - (y_2-x_2)^2}{|y-x|^4} \omega_n^\calL(t,y) dy  \\
	&\quad = \left[ \int_{|x-y| \le L \bell^{1+c\dlt} } + \int_{ L \bell^{1+c\dlt} < |x-y| \le L \bell^{1-c\dlt} } + \int_{S_1\backslash \{ y: |x-y| \le L\bell^{1-c\dlt} \}  } \right]  \frac{(y_1-x_1)^2 - (y_2-x_2)^2}{|y-x|^4} \omega_n^\calL(t,y) dy\\
	&\quad  =:  I + II + III 
	\end{split}
	\end{equation*} and then it is straightforward to bound terms $I$ and $II$: \begin{equation*} 
	\begin{split}
	|I| \lesssim  \nrm{\omega_0}_{L^\infty}
	\end{split}
	\end{equation*} (proceeding as in region (i) from the proof of \eqref{eq:nabla-u-global}) \begin{equation*} 
	\begin{split}
	& |II| \lesssim \nrm{\omega_0}_{L^\infty} \int_{ L \bell^{1+c\dlt} < |x-y| \le L \bell^{1-c\dlt}  } \frac{1}{|y-x|^2} dy \lesssim \dlt \ln\frac{1}{\bell}. 
	\end{split}
	\end{equation*} Finally, to estimate $III$ it suffices to bound the following ``rectangular'' integral (note that $y_2\ge y_1$ in this region): \begin{equation*} 
	\begin{split}
	&  \int_{ [0,L \bell^{1-c\dlt}] \times [L\bell^{1-c\dlt},L] } \frac{y_2^2-y_1^2}{|y|^4} dy  = \int_{ [0,1]\times [1,\bell^{-(1-c\dlt)}] } \frac{z_2^2-z_1^2}{|z|^4} dz \\
	& \quad = \int_{1}^{\bell^{-(1-c\dlt)}} \frac{1}{1+z_2^2} dz_2 < + \infty. 
	\end{split}
	\end{equation*} Indeed, this type of rectangular integral bound has appeared already in \cite{KS,Z}. The proof is complete. 
\end{proof}

\subsubsection{Estimates for trajectories and Lagrangian deformation}

We keep working in the time interval $[0,\dlt\nrm{\omega_0}_{L^\infty}^{-1}]$ and we shall first extract a smaller ball region $\calD'_{\dlt}$ such that $\eta(t,\calD'_{\dlt}) \subset \calD$ during this time interval. We then prove estimates regarding the Lagrangian deformation $\nabla\eta(t,x)$ for $x \in \calD'_{\dlt}$. 

First, it is not difficult to show that $u_2(t,x_1,x_2) < 0$ when 
$|x| \le \tell$. (For a proof, one can see the Key Lemma from \cite{KS} and \cite{Z}. This piece of information will not be essential in our arguments.) Next, we use that (assuming $\eta_1(0) > 0$) \begin{equation*} 
\begin{split}
& \dot{\eta}_1(t) = u_1(t,\eta(t)) \le \nrm{\rd_1 u_1}_{L^\infty(\calD)} \eta_1(t)
\end{split}
\end{equation*} which is valid as long as $\eta(t) \in \calD$. We have used that $u_1(t,0,\eta_2) = 0$ holds in the above estimate. Assuming formally that $\eta(t) \in \calD$, we have from \eqref{eq:d1u1-upper} that \begin{equation*} 
\begin{split}
& \eta_1(t) \le x_1 \exp\left( ct  \nrm{\omega_0}_{L^\infty} (C + \ln \frac{(1+\kpp)A}{\bell})  \right) \le x_1(1 + \dlt)\left( \frac{(1+\kpp)A}{\bell} \right)^{c\delta} < 2 x_1 \bell^{-(1+c\dlt)c\dlt} < x_1 \bell^{-c\dlt}
\end{split}
\end{equation*} (by taking $\dlt > 0$ sufficiently small; recall that the value of $c$ can change several times even within a single line). Hence we may define the region \begin{equation}\label{eq:needle-small} 
\begin{split}
& \calD'_\dlt = \{ |x| < \tell \cdot  \bell^{c\dlt}\} 
\end{split}
\end{equation} so that $\eta(t,\calD'_{\dlt}) \subset \calD$ for $t \in [0,\dlt\nrm{\omega_0}_{L^\infty}^{-1}]$. In the remainder of this section, we always take $x \in \calD'_{\dlt}$ and $t \in [0,\dlt\nrm{\omega_0}_{L^\infty}^{-1}]$.
 
\begin{lemma}[Creation of large Lagrangian deformation]\label{lem:LLD}
	Let us denote $\eta(t) = \eta_n(t)$ to be the flow associated with $\omega_n(t)$ from Lemma \ref{lem:nabla-u-local}. For $x \in \calD'_{\dlt}$ and $t \in [0,\dlt\nrm{\omega_0}_{L^\infty}^{-1}]$, we have that \begin{equation}\label{eq:LLD-1}
	\begin{split}
	\exp\left( \frac{2}{\pi}\nrm{\omg^\calL_{n,0}}_{L^\infty} t (1-\eps_n-C\dlt) \ln \frac{1}{\bell}  \right) < \rd_1 \eta_1(t,x) < \exp\left( \frac{2}{\pi}\nrm{\omg^\calL_{n,0}}_{L^\infty} t (1+\eps_n+C\dlt) \ln \frac{1}{\bell}  \right)
	\end{split}
	\end{equation} and   \begin{equation}\label{eq:flow-upperbound}
	\begin{split}
	|\rd_1\eta_2(t,x)| + |\rd_2\eta_1(t,x)| \le \eps_n \rd_1\eta_1(t,x)
	\end{split}
	\end{equation} where $\eps_n\rightarrow 0$ as $n\rightarrow+\infty$.
\end{lemma}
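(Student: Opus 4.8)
The plan is to derive the ODE system for the Lagrangian deformation matrix $\nabla\eta(t,x)$ and integrate it using the sharp pointwise velocity-gradient bounds from Lemma \ref{lem:nabla-u-local}. Since the flow stays inside $\calD$ for $x \in \calD'_\dlt$ and $t \in [0,\dlt\nrm{\omg_0}_{L^\infty}^{-1}]$ (this is exactly the reason $\calD'_\dlt$ was constructed), the estimates \eqref{eq:nabla-u-local}--\eqref{eq:d1u2} apply along the entire trajectory. Differentiating the flow equation $\dot\eta(t,x) = u(t,\eta(t,x))$ in $x$ gives
\begin{equation*}
\frac{d}{dt} \rd_j\eta_i(t,x) = \sum_{k} \rd_k u_i(t,\eta(t,x)) \, \rd_j\eta_k(t,x),
\end{equation*}
with $\nabla\eta(0,x) = \mathrm{Id}$. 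Because the flow is $2+\frac12$-dimensional, only the $2\times 2$ block in $(i,j)\in\{1,2\}^2$ is nontrivial, and moreover $\partial_2 u_2 = -\partial_1 u_1$. So the relevant system is a planar linear (nonautonomous) ODE whose coefficient matrix has diagonal entries $\pm\partial_1 u_1$ of size $\approx \nrm{\omg_0}_{L^\infty}\ln\frac1{\bell}$ and off-diagonal entries $\partial_2 u_1, \partial_1 u_2$ of size only $O(\nrm{\omg_0}_{L^\infty})$.

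The key steps, in order: (i) Record the ODE for $\rd_1\eta_1$: $\frac{d}{dt}\rd_1\eta_1 = (\rd_1 u_1)\,\rd_1\eta_1 + (\rd_2 u_1)\,\rd_1\eta_2$. If the off-diagonal term were absent this integrates exactly to $\exp(\int_0^t \rd_1u_1)$, and \eqref{eq:nabla-u-local}--\eqref{eq:nabla-u-local2} give precisely the exponents in \eqref{eq:LLD-1}. (ii) Control the contamination from $\rd_1\eta_2$ (and symmetrically $\rd_2\eta_1$). Introduce $m(t) := \max\{|\rd_1\eta_2(t,x)|, |\rd_2\eta_1(t,x)|\}$ and $M(t) := \rd_1\eta_1(t,x)$ (note $\rd_1\eta_1>0$ since it starts at $1$ and, by a bootstrap, stays close to the pure-exponential solution). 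From the ODEs for $\rd_1\eta_2$ and $\rd_2\eta_1$ together with $|\partial_2 u_1| + |\partial_1 u_2| \le C\nrm{\omg_0}_{L^\infty}$ and $|\partial_1 u_1|,|\partial_2 u_2| \lesssim \nrm{\omg_0}_{L^\infty}\ln\frac1{\bell}$, one gets a differential inequality of the form $\dot m \lesssim \nrm{\omg_0}_{L^\infty}(\ln\tfrac1{\bell})\, m + C\nrm{\omg_0}_{L^\infty}\, M$. (iii) Run a Grönwall/bootstrap argument: assume $m(t) \le \eps_n M(t)$ on a maximal subinterval; plug this back into the $\rd_1\eta_1$ equation to get $\frac{d}{dt}\log M = \partial_1 u_1 + O(\nrm{\omg_0}_{L^\infty}\eps_n)$, which preserves the bounds \eqref{eq:LLD-1} up to enlarging $\eps_n$; then feed the resulting lower bound $M(t) \ge \exp(c\,\nrm{\omg_0}_{L^\infty} t\ln\frac1{\bell})$ into the $m$-inequality and solve to check $m(t)/M(t) \le \eps_n$ with $\eps_n\to 0$. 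The point is that $M$ grows like $\bell^{-c\,\nrm{\omg_0}_{L^\infty}t}$ (genuinely large as $n\to\infty$, for $t$ not too small), whereas the source term driving $m$ is only $O(\nrm{\omg_0}_{L^\infty} M)$, which after dividing by $M$ and integrating against the factor $M^{-1}$ contributes something like $(\ln\frac1{\bell})^{-1} \to 0$; since $\eps_n$ already absorbs $\ln\frac1{\bell}$-type losses, the ratio $m/M$ is forced to be $o(1)$.

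I expect the main obstacle to be step (iii): making the bootstrap clean. Two subtleties deserve care. First, $\rd_1\eta_1$ must be shown to stay strictly positive (so that dividing by $M$ and taking logs is legitimate) — this follows because $\rd_1\eta_1(0)=1$ and the differential inequality forbids it from decaying too fast before it could reach zero, but it should be stated as part of the continuity/bootstrap setup. Second, one has to be slightly careful that the constant $C$ multiplying $\dlt$ in the exponents of \eqref{eq:LLD-1} and the $\eps_n$ coming from the $m/M$ feedback do not circularly depend on each other; the resolution is to fix $\dlt$ small first (absolute), then let $n\to\infty$ so that $\eps_n\to 0$ independently, and finally relabel $\eps_n + C\eps_n \rightsquigarrow \eps_n$. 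Everything else — the Cauchy/Duhamel formula for the planar ODE, the elementary integrals $\int_0^t \partial_1u_1\,ds$ using Lemma \ref{lem:nabla-u-local} — is routine. It is also worth noting that Yudovich's estimate \eqref{eq:Yud} already gives a crude two-sided bound on $\nabla\eta$; Lemma \ref{lem:LLD} is the sharp refinement along the distinguished axis direction, so the proof is really about upgrading \eqref{eq:Yud} using the precise constant $\frac2\pi\ln\frac1{\bell}$ in the velocity gradient.
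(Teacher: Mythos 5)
Your overall strategy is the same as the paper's: differentiate the flow map to get the linear ODE system for the $2\times2$ block of $D\eta$, feed in the sharp bounds of Lemma \ref{lem:nabla-u-local} along the trajectory (which stays in $\calD$ by the construction of $\calD'_\dlt$), and close a bootstrap showing the off-diagonal entries are $\eps_n$-small relative to $\rd_1\eta_1$ before integrating the diagonal equation. The positivity of $\rd_1\eta_1$ and the continuity argument you flag are exactly how the paper sets this up.

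However, the differential inequality you record in step (ii) is too weak to close step (iii), and this is a genuine gap as written. You bound the linear coefficient acting on $m=\max\{|\rd_1\eta_2|,|\rd_2\eta_1|\}$ by $+\nrm{\omg_0}_{L^\infty}\ln\frac{1}{\bell}$, i.e.\ you only use $|\rd_1u_1|,|\rd_2u_2|\lesssim\nrm{\omg_0}_{L^\infty}\ln\frac{1}{\bell}$. Gr\"onwall then gives $m(t)\le C\nrm{\omg_0}_{L^\infty}\,t\,e^{a_+t}$ with $a_+=(1+\eps_n+C\dlt)\nrm{\omg_0}_{L^\infty}\ln\frac{1}{\bell}$, while the lower bound on $M=\rd_1\eta_1$ only gives $M(t)\ge e^{a_-t}$ with $a_-=(1-\eps_n-C\dlt)\nrm{\omg_0}_{L^\infty}\ln\frac{1}{\bell}$; hence over the full time interval $t\sim\dlt\nrm{\omg_0}_{L^\infty}^{-1}$ one only obtains $m/M\lesssim \dlt\,\bell^{-C\dlt^2}$, which blows up as $n\to\infty$ rather than tending to zero. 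What actually saves the argument --- and what the paper uses --- is the \emph{sign} structure coming from incompressibility: since $\rd_2u_2=-\rd_1u_1$ and $\rd_1u_2=\rd_2u_1+\omg$ with $\omg\equiv0$ on $\calD$, the equation for $\rd_1\eta_2$ reads $\frac{d}{dt}\rd_1\eta_2=-\rd_1u_1\,\rd_1\eta_2+\rd_2u_1\,\rd_1\eta_1$, with a strongly \emph{damping} linear term $-\rd_1u_1$. The paper exploits this by propagating positivity of $\eps\,\rd_1\eta_1-|\rd_1\eta_2|$, where $\eps$ is the supremum of $|\rd_2u_1|/\rd_1u_1$ on $[0,\dlt\nrm{\omg_0}_{L^\infty}^{-1}]\times\calD$, which is $\lesssim(\ln\frac{1}{\bell})^{-1}=:\eps_n$ by \eqref{eq:nabla-u-local} and \eqref{eq:d1u2}; it is this damping (and, for $\rd_2\eta_1$, the fact that its source term is $\rd_2u_1\,\rd_2\eta_2$ with $\rd_2\eta_2\approx M^{-1}$ decaying) that produces the ratio of order $(\ln\frac{1}{\bell})^{-1}$ your step (iii) correctly anticipates. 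Your heuristic of ``integrating against the factor $M^{-1}$'' is consistent with this mechanism, but the inequality you actually wrote down does not encode it: to close the bootstrap you must keep the signed coefficients, and use the same quantity $\rd_1u_1(s,\eta(s,x))$ in the homogeneous factor and in $M$ rather than mixing the upper and lower bounds, since the mismatch $e^{(a_+-a_-)t}=\bell^{-C(\eps_n+\dlt)\dlt}$ is not harmless.
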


\begin{proof}

	Now we consider the following system of ODEs: for each $x $,  denoting for simplicity $\eta := \eta(t,x)$ and $\rd_i\eta_j(t):= \rd_i\eta_j(t,x)$, \begin{equation}\label{eq:stretch-ODE}
	\begin{split}
	\frac{d}{dt} \rd_1 \eta_1(t) &= \rd_1 u_1(t,\eta ) \rd_1\eta_1(t) + \rd_2 u_1(t,\eta ) \rd_1\eta_2(t) \\
	\frac{d}{dt} \rd_1 \eta_2(t) &= -\rd_1 u_1(t,\eta ) \rd_1\eta_2(t) + (\rd_2 u_1(t,\eta ) +\omega(t,\eta))  \rd_1\eta_1(t) .
	\end{split}
	\end{equation} As long as $\eta \in \calD$ we have that $\omega(t,\eta) = 0$. We shall prove that for each fixed $x$, we have both \begin{equation}\label{eq:positive} 
	\begin{split}
	& \rd_1\eta_1(t) >0, \quad \eps \rd_1\eta_1(t) - |\rd_1\eta_2(t)| > 0. 
	\end{split}
	\end{equation} Here $\eps := \inf_{[0,\delta/\nrm{\omega_0}_{L^\infty}] \times \calD } \frac{|\rd_2u_1|}{\rd_1u_1} >0$. Note that both inequalities are satisfied for some nonempty interval of time containing $t = 0$, since $\rd_1\eta_1(t=0) = 1$ and $\rd_1\eta_2(t = 0) = 0$. Multiplying the first equation of \eqref{eq:stretch-ODE} by $\eps$ and subtracting the second, \begin{equation*} 
	\begin{split}
	&\frac{d}{dt} \left( \eps \rd_1\eta_1 - |\rd_1\eta_2| \right)  \ge \rd_1u_1(\eps\rd_1\eta_1 + |\rd_1\eta_2|) - \eps |\rd_2u_1|\rd_1\eta_2| - |\rd_2u_1| |\rd_1\eta_1| \\
	&\quad\ge  \rd_1u_1(\eps\rd_1\eta_1 -|\rd_1\eta_2|) + 2 \rd_1u_1|\rd_1\eta_2| - \eps |\rd_2u_1|\rd_1\eta_2| - |\rd_2u_1| |\rd_1\eta_1| \\
	&\quad\ge (\rd_1u_1- \frac{1}{\eps|\rd_2u_1|})(\eps\rd_1\eta_1 -|\rd_1\eta_2|) + \frac{1}{\eps}|\rd_2u_1|(\eps \rd_1\eta_1 - |\rd_1\eta_2|)+ 2 \rd_1u_1|\rd_1\eta_2| - \eps |\rd_2u_1|\rd_1\eta_2| - |\rd_2u_1| |\rd_1\eta_1|  \\
	\end{split}
	\end{equation*} and \textit{assuming} $\rd_1\eta_1 > 0$, \begin{equation*} 
	\begin{split}
	&\frac{1}{\eps}|\rd_2u_1|(\eps \rd_1\eta_1 - |\rd_1\eta_2|)+ 2 \rd_1u_1|\rd_1\eta_2| - \eps |\rd_2u_1|\rd_1\eta_2| - |\rd_2u_1| |\rd_1\eta_1| \\
	&\quad = (2\rd_1u_1 - \frac{1}{\eps}|\rd_2u_1| - \eps |\rd_2u_1|)|\rd_1\eta_2| > 0. 
	\end{split}
	\end{equation*} Hence this shows that under the assumption  $\rd_1\eta_1 > 0$, we can propagate in time that $\eps \rd_1\eta_1 - |\rd_1\eta_2|$. Of course the latter again implies $\rd_1\eta_1>0$. Therefore a simple continuity argument establishes \eqref{eq:positive}. 
	
	Returning to \eqref{eq:stretch-ODE}, we have that \begin{equation*} 
	\begin{split}
	&(1 - \eps^2) \rd_1 u_1(t,\eta ) \rd_1\eta_1(t) < \frac{d}{dt} \rd_1 \eta_1(t) < (1 + \eps^2) \rd_1 u_1(t,\eta ) \rd_1\eta_1(t)
	\end{split}
	\end{equation*} and integrating in time gives, with $\eps = \eps_n \rightarrow 0$ as $n\rightarrow+\infty$, \begin{equation*} 
	\begin{split}
	& \exp\left( \frac{2}{\pi}\nrm{\omg^\calL}_{L^\infty} t (1-\eps_n-C\dlt) \ln \frac{1}{\bell}  \right) < \rd_1 \eta_1(t) < \exp\left( \frac{2}{\pi}\nrm{\omg^\calL}_{L^\infty} t (1+\eps_n+C\dlt) \ln \frac{1}{\bell}  \right)
	\end{split}
	\end{equation*} This finishes the proof. 
\end{proof}

\subsubsection{Estimates for the gradient of the vorticity}

In this section, we shall establish that for $p = 2, +\infty$, we have the following sharp estimate on $\nb\omg_n^\calL$: \begin{equation}\label{eq:nb-omega-Lp}
\begin{split}
\nrm{\nb\omega_n^\calL(t)}_{L^p} \le  \nrm{\nb\omega_{n,0}^\calL}_{L^p}\exp\left( \dlt(1+C\dlt) \frac{\nrm{\rd_1 u^\calL_{n,1} }_{L^\infty_{t,x}}}{\nrm{\omega_{n,0}^\calL}_{L^\infty}} \right)
\end{split}
\end{equation} 
for $t\in [0,\delta/\|\omega_{n,0}^{\mathcal L}\|_{L^\infty}]$.
The same estimate holds uniformly for $\nb\omg_n^{\calL,\nu}$ with any $\nu > 0$ (possibly with some different constant $C>0$). Here $\omega_n^{\calL,\nu}$ is defined by the solution of 2D Navier-Stokes \begin{equation*} 
\begin{split}
& \rd_t \omega_n^{\calL,\nu} + u_n^{\calL,\nu} \cdot \nb \omega_n^{\calL,\nu} = \nu\lap \omega_n^{\calL,\nu}, \\
& \nb \cdot u_n^{\calL,\nu} = 0, \\
& \omega_n^{\calL,\nu}(t = 0) = \omega_{n,0}^{\calL}. 
\end{split}
\end{equation*} To see that \eqref{eq:nb-omega-Lp} holds, simply take the gradient of the equation for $\omega_n^\calL$: \begin{equation*} 
\begin{split}
& \rd_t\nb \omega_n^{\calL} + u_n^{\calL} \cdot \nb (\nb\omg_n^\calL) = ( \nb u_n^\calL )^T \nb \omg_n^\calL. 
\end{split}
\end{equation*} Taking the dot product with $\nb \omega_n^{\calL}$ and integrating in space gives \begin{equation*} 
\begin{split}
& \frac{1}{2} \frac{d}{dt} \nrm{\nb \omega_n^{\calL}}_{L^2}^2 \le \left| \int  \nb \omega_n^{\calL} \cdot ( \nb u_n^\calL )^T \nb \omg_n^\calL \right|. 
\end{split}
\end{equation*} Recall from \eqref{eq:nabla-u-global}--\eqref{eq:nabla-u-global2} that the $2\times 2$ matrix $\nb u_n^\calL$ has the following structure: \begin{equation*} 
\begin{split}
& M = \begin{pmatrix}
X & O(X\dlt) \\
O(X\dlt) & -X  
\end{pmatrix}
\end{split}
\end{equation*} where $X \gg 1$. Eigenvalues of $M$, in absolute value, has size $X(1 \pm O(\dlt))$. In particular we see that for any $2\times 1$ vector $v$, \begin{equation*} 
\begin{split}
& |v^T Mv| \le X(1 \pm O(\dlt))|v|^2 . 
\end{split}
\end{equation*} Applying this observation with $X = \nrm{\rd_1 u^\calL_{n,1} }_{L^\infty_{t,x}}$ gives \eqref{eq:nb-omega-Lp} for $p = 2$, since we have the bound \begin{equation*} 
\begin{split}
& \frac{1}{2} \frac{d}{dt} \nrm{\nb \omega_n^{\calL}}_{L^2}^2 \le (1 + C\dlt) \nrm{\rd_1 u^\calL_{n,1} }_{L^\infty_{t,x}} \nrm{\nb \omega_n^{\calL}}_{L^2}^2  .
\end{split}
\end{equation*} The argument for $p = +\infty$ is similar. (Indeed the same estimate holds uniformly for $p$ in $1\le p \le +\infty$.) We shall use this type of argument  several times in the following. 

Based on \eqref{eq:nb-omega-Lp}, let us obtain a sharp bound for the second gradient $\nrm{\nabla^2 u^\calL_n(t)}_{L^\infty_{\calD}}$. Note that each component of $\nabla^2 u^\calL_n(t)$ is a singular integral transform applied to a derivative of $\omega_n^\calL$, which vanishes both near and away from the axes. Proceeding similarly as in the proof of Lemma \ref{lem:velgrad-global}, we estimate for $x \in \calD$ \begin{equation*} 
\begin{split}
& |\nabla^2 u^\calL_n(t,x)| \le C \nrm{\nabla\omg^\calL_n(t)}_{L^\infty} \left( 1 + \int_{ \ell \bell^{ct \nrm{\omg_{n,0}^\calL}_{L^\infty} }   }^{\ell \bell^{-ct \nrm{\omg_{n,0}^\calL}_{L^\infty} } } \frac{dr}{r}\right)  \le C(1 + t \ln \frac{1}{\bell}) \nrm{\omg_{n,0}^\calL}_{L^\infty}\nrm{\nb\omg_n^\calL(t) }_{L^\infty} . 
\end{split}
\end{equation*} Hence, as long as $0 \le t \le \dlt/\nrm{\omega^\calL_{n,0}}_{L^\infty}$, \begin{equation}\label{eq:velsecondgrad}
\begin{split}
\nrm{\nabla^2 u^\calL_n(t)}_{L^\infty_{\calD}} \le C(1 + t\nrm{\omg_{n,0}^\calL}_{L^\infty} \ln \frac{1}{\bell}) \nrm{\nb\omg_{n,0}^\calL  }_{L^\infty} \exp\left( t(1+C\dlt) \nrm{\rd_1 u^\calL_{n,1}}_{L^\infty_{t,x}} \right). 
\end{split}
\end{equation}

\subsection{Setup for the small-scale vorticity}\label{subsec:small} With a length scale $\tell = \tell_n \ll \ell$ and small $\dlt>0$, we recall the definition of $\calD'_\dlt$ from \eqref{eq:needle-small}. Define $u^\calS_{n,0} \in C^\infty(\bbT^3)$ in a way that \begin{equation*} 
\begin{split}
& u^\calS_{n,0}(x_1,x_2,x_3) = \begin{pmatrix}
0 \\
0 \\
M x_2 
\end{pmatrix}
\end{split} 
\end{equation*} on $\calD'_\dlt \times \bbT$ and $u^\calS_{n,0} \equiv 0$ on $(\bbT^2\backslash\calD)\times\bbT $. We may arrange in addition that $u^\calS_{n,0}$ is only a function of $x_2$ and has vanishing first and second components. Therefore we shall identify $u^\calS_{n,0}$ with its third component with some abuse of notation. Note that $u^\calS_{n,0}$  is divergence-free. Note that taking the curl gives \begin{equation*} 
\begin{split}
& \omg^\calS_{n,0} := \nabla \times u^\calS_{n,0} = \begin{pmatrix}
M \\
0 \\
0 
\end{pmatrix} \quad \mbox{on} \quad \calD'_\dlt \times \bbT 
\end{split}
\end{equation*} and we see that $M \le \nrm{\omega^\calS_{n,0}}_{L^\infty} \le 2M$ (by redefining $u^\calS_{n,0}$ outside $\calD'_\dlt$ if necessary). 
\begin{remark}
We simply have $\|u_{n,0}^{\mathcal S}\|_{L^2}^2\approx M\int_0^{\tilde\ell}\int_0^{\tilde\ell}x_2^2dx_2dx_3
\approx M\tilde\ell^4\approx\tilde\ell^2\|\omega_{n,0}^S\|_{L^2}^2$.
\end{remark}

\subsubsection{Estimates for the small-scale vorticity} We consider the equation \begin{equation*} 
\begin{split}
& \rd_t u_n^\calS + u_n^\calL \cdot \nb u_n^\calS = 0. 
\end{split}
\end{equation*} Since $u_n^\calL$ is divergence-free, we immediately have \begin{equation*} 
\begin{split}
& \nrm{u_n^\calS}_{L^p} =  \nrm{u_{n,0}^\calS}_{L^p} 
\end{split}
\end{equation*} for all $1 \le p \le + \infty$. Next, taking the curl gives \begin{equation}\label{eq:omg-small}
\begin{split}
\rd_t \omega_n^\calS + (u_n^\calL\cdot\nabla)\omega_n^\calS = \nabla u_n^\calL \omega_n^\calS,
\end{split}
\end{equation} and we obtain that \begin{equation*} 
\begin{split}
& \frac{1}{2} \frac{d}{dt} \nrm{\omg_n^\calS}_{L^2}^2 \le \left| \int \omega_n^\calS \cdot \nb u^\calL_n \cdot \omega_n^\calS \right| \le  (1 + C\dlt)  \nrm{\rd_1u^\calL_{n,1}}_{L^\infty_{\calD}}  \nrm{\omg_n^\calS}_{L^2}^2, 
\end{split}
\end{equation*} where we have used that $$ \nrm{\rd_2u^\calL_{n,1}}_{L^\infty_{\calD}} + \nrm{\rd_1u^\calL_{n,2}}_{L^\infty_{\calD}} \lesssim \dlt  \nrm{\rd_1u^\calL_{n,1}}_{L^\infty_{\calD}}.$$ We shall use this observation frequently in the following. Similarly, it is not difficult to see (repeating a bootstrap argument as in the proof of Lemma \ref{lem:LLD}) that \begin{equation*} 
\begin{split}
& \frac{d}{dt} \nrm{\omg_n^\calS}_{L^\infty} \le (1 + C\dlt)  \nrm{\rd_1u^\calL_{n,1}}_{L^\infty_{\calD}}  \nrm{\omg_n^\calS}_{L^\infty}. 
\end{split}
\end{equation*} Integrating in time, we see that for $p = 2, +\infty$ (actually this holds uniformly for any $1\le p \le +\infty$) \begin{equation}\label{eq:small-omega-Lp}
\begin{split}
\nrm{\omega_n^\calS(t)}_{L^p} \le \nrm{\omega_{n,0}^\calS}_{L^p} e^{(1+C\dlt)\int_0^t\nrm{ \rd_1 u_{n,1}^\calL(\tau)}_{L^\infty_\calD}d\tau} \le \nrm{\omega_{n,0}^\calS}_{L^p}\exp\left( \dlt(1+C\dlt) \frac{\nrm{\rd_1 u^\calL_{n,1} }_{L^\infty_{t,x}}}{\nrm{\omega_{n,0}^\calL}_{L^\infty}} \right). 
\end{split}
\end{equation} It is not difficult to see that \begin{equation}\label{eq:omg-small-comp} 
\begin{split}
& \omega^\calS_{n,1} \le C\dlt |\omega^\calS_{n,2}|
\end{split}
\end{equation} pointwise in space and time. This can be seen directly from \eqref{eq:omg-small} but it is easy to obtain from Lemma \ref{lem:LLD} and the following Cauchy formula \begin{equation*} 
\begin{split}
& \omega_n^\calS(t,\eta(t,x)) = D\eta(t,x) \omega^\calS_{n,0}(x)
\end{split}
\end{equation*} since $\omega_{n,1}^\calS(t = 0) = 0$.

Similarly, from \begin{equation*}
\begin{split}
\rd_t \omega_n^{\calS,\nu} + (u_n^{\calL,\nu}\cdot\nabla)\omega_n^{\calS,\nu} = \nabla u_n^{\calL,\nu} \omega_n^{\calS,\nu} + \nu \lap \omega_n^{\calS,\nu},
\end{split}
\end{equation*} one can obtain that the estimates \eqref{eq:small-omega-Lp} are valid for $\omega_n^{\calS,\nu}$ uniformly for any $\nu>0$, perhaps with some different absolute constant $C>0$.

We shall need just one more estimate: take the first component of the equation for $\omega_n^\calS$ and differentiating gives \begin{equation*} 
\begin{split}
& D_t \rd_1 \omg^\calS_{n,1} = - \rd_1 u_{n,2}^\calL \rd_2\omega_{n,1}^\calS + \rd_2u_{n,1}^\calL \rd_1\omega_{n,2}^\calS + \rd_1\rd_2u_{n,1}^\calL \omg_{n,2}^\calS + \rd_1 \rd_2 u_{n,1}^\calL \omg_{n,1}^\calS 
\end{split}
\end{equation*} and \begin{equation*} 
\begin{split}
& D_t \rd_2 \omg^\calS_{n,1} = 2 \rd_1u_{n,1}^\calL \rd_2\omg_{n,1}^\calS - \rd_2u_{n,1}^\calL\rd_1\omg_{n,1}^\calS + \rd_2 u_{n,1}^\calL \rd_2\omg_{ {n,2}}^\calS + \rd_2\rd_2 u_{n,1}^\calL \omg_{n,2}^\calS + \rd_2\rd_1 u_{n,1}^\calL \omg_{n,1}^\calS 
\end{split}
\end{equation*} where we have written \begin{equation*} 
\begin{split}
&D_t = \rd_t + u^\calL_n\cdot\nb 
\end{split}
\end{equation*} for simplicity. It is not difficult to see that we have \begin{equation*} 
\begin{split}
& \nrm{\rd_1\omega_{n,1}^\calS(t)}_{L^\infty} + \nrm{\rd_2\omega_{n,2}^\calS(t)}_{L^\infty} \le C \nrm{\rd_2\omega_{n,1}^\calS(t)}_{L^\infty} . 
\end{split}
\end{equation*} We then estimate using \eqref{eq:omg-small-comp} that \begin{equation*} 
\begin{split}
& \frac{d}{dt} \nrm{\rd_2 \omg_{n,1}^\calS(t)}_{L^\infty} \le 2(1+C\dlt)\nrm{ \rd_1u^\calL_{n,1} }_{L^\infty_{t,x}}  \nrm{\rd_2 \omg_{n,1}^\calS(t)}_{L^\infty} + (1+C\dlt) \nrm{\nabla^2 u^\calL_n(t)}_{L^\infty} \nrm{\omg^\calS_{n,1}(t)}_{L^\infty} .
\end{split}
\end{equation*} Using Gronwall's inequality together with \eqref{eq:velsecondgrad}, we obtain from  \begin{equation}
\begin{split}
 &\nrm{\rd_2 \omg_{n,1}^\calS(t)}_{L^\infty} \le  \nrm{\nb\omg_{n,0}^\calS}_{L^\infty} \exp\left( 2t(1+C\dlt)\nrm{ \rd_1u^\calL_{n,1} }_{L^\infty_{t,x}}  \right) \\
 &\quad  +  (1+C\dlt)  \int_0^t \exp\left( 2(t-s)(1+C\dlt)\nrm{ \rd_1u^\calL_{n,1} }_{L^\infty_{t,x}}  \right)  \nrm{\nabla^2 u^\calL_n(s)}_{L^\infty} \nrm{\omg^\calS_{n,1}(s)}_{L^\infty}  ds
\end{split}
\end{equation} that \begin{equation}\label{eq:omg-grad-small-1}
\begin{split}
\nrm{\rd_2 \omg_{n,1}^\calS(t)}_{L^\infty} \le   \left(\nrm{\nb\omg_{n,0}^\calS}_{L^\infty}  + t(1 + \dlt \ln \frac{1}{\bell})\nrm{\nb\omg^\calL_{n,0}}_{L^\infty} \nrm{\omg^\calS_{n,0}}_{L^\infty} \right) \exp\left( 2t(1+C\dlt)\nrm{ \rd_1u^\calL_{n,1} }_{L^\infty_{t,x}}  \right). 
\end{split}
\end{equation} One can similarly estimate $\rd_1\omg_{n,1}^\calS$ and the gradient of the second component in a parallel manner; it turns out that $\nrm{\nabla \omg^\calS_{n,2}(t)}_{L^\infty}$ satisfies the estimate \eqref{eq:omg-grad-small-1} as well. We omit the details. Moreover, in $L^2$ we can obtain a corresponding estimate: \begin{equation}\label{eq:omg-grad-L2}
\begin{split}
\nrm{\nb\omg_{n}^\calS(t)}_{L^2} \le   \left(\nrm{\nb\omg_{n,0}^\calS}_{L^2}  + t(1 + \dlt \ln \frac{1}{\bell})\nrm{\nb\omg^\calL_{n,0}}_{L^\infty} \nrm{\omg^\calS_{n,0}}_{L^2} \right) \exp\left( 2t(1+C\dlt)\nrm{ \rd_1u^\calL_{n,1} }_{L^\infty_{t,x}}  \right). 
\end{split}
\end{equation}


\subsection{Inviscid limits}\label{subsec:inviscid}

As before, we shall always take $t \in [0,\dlt/\nrm{\omega_0}_{L^\infty}]$ throughout this section. We obtain sharp upper bounds for the $L^2$ and $\dot{H}^1$ differences between the Euler and Navier-Stokes velocities, both for the large and small scales. 

\subsubsection{$L^2$ for the large scale}\label{subsubsec:inviscid-1} We define \begin{equation*} 
\begin{split}
& I^\calL(t) := \nrm{u_n^{\calL,\nu} - u_n^\calL}_{L^2}^2. 
\end{split}
\end{equation*} 
We compare the 2D Euler and Navier-Stokes equations of the velocity:
\begin{equation*}
\begin{split}
&\rd_t u_n^{\calL,\nu} + u_n^{\calL,\nu} \cdot \nabla u_n^{\calL,\nu} + \nabla p_n^{\calL,\nu} = \nu \lap u_n^{\calL,\nu} , \\
&\rd_t u_n^\calL + u_n^\calL\cdot\nabla u_n^\calL + \nabla p_n^\calL = 0. 
\end{split}
\end{equation*} 
Then, we see that 
\begin{equation*}
\begin{split}
\frac{1}{2}\frac{d}{dt} \nrm{u_n^{\calL,\nu} - u_n^\calL}_{L^2}^2 + \int( u_n^\calL - u_n^{\calL,\nu}) \cdot \nabla u_n^\calL \cdot (u_n^{\calL,\nu} - u_n^\calL)  
= \nu \int \lap u_n^{\calL,\nu} \cdot (u_n^{\calL,\nu} - u_n^\calL) .
\end{split}
\end{equation*} We handle the right hand side as follows: \begin{equation*}
\begin{split}
\nu \int \lap u_n^{\calL,\nu} \cdot (u_n^{\calL,\nu} - u_n^\calL)  = -\nu\int |\nabla u_n^{\calL,\nu}|^2 + \nu \int \nabla u_n^{\calL,\nu} : \nabla u_n^\calL  \le C\nu \nrm{\nb u_n^\calL}_{L^2}^2 . 
\end{split}
\end{equation*} Moreover, inspecting the second term on the left hand side, we may bound \begin{equation*} 
\begin{split}
& \left| \int( u_n^\calL - u_n^{\calL,\nu}) \cdot \nabla u_n^\calL \cdot (u_n^{\calL,\nu} - u_n^\calL)   \right| \le \nrm{\rd_1u_{n,1}^\calL}_{L^\infty} I^\calL + C(\nrm{\rd_1u_{n,2}^\calL}_{L^\infty} + \nrm{\rd_2u_{n,1}^\calL}_{L^\infty} )I^\calL .
\end{split}
\end{equation*} Hence, appealing to the global bound \eqref{eq:nabla-u-global}--\eqref{eq:nabla-u-global2}, 
\begin{equation*}
\begin{split}
\frac{d}{dt}  I^\calL   \le  2(1+C\dlt)\nrm{\rd_1 u_{n,1}^\calL}_{L^\infty} I^\calL  + C\nu \nrm{\nabla u_n^\calL}_{L^2}^2.
\end{split}
\end{equation*} Using that $\nrm{\nabla u_n^\calL}_{L^2} \le C \nrm{\omega_n^\calL}_{L^2} \le C \nrm{\omega_{n,0}^\calL}_{L^2}$ and $I^\calL(0) = 0$,  we arrive at \begin{equation}\label{eq:large-L2} 
\begin{split}
& I^\calL(t) \le C\nu t \nrm{\omega^\calL_{n,0}}_{L^2}^2 \exp\left(2\dlt(1+C\dlt) \frac{\nrm{\rd_1 u_{n,1}^\calL}_{L^\infty_{t,x}}}{\nrm{\omega^\calL_0}_{L^\infty}} \right). 
\end{split}
\end{equation} The exponential term on the right hand side will appear frequently, so we shall introduce notation \begin{equation}\label{eq:calE} 
\begin{split}
& \calE := \exp\left( (1 + C\dlt) \frac{\nrm{\rd_1 u_{n,1}^\calL}_{L^\infty_{t,x}}}{\nrm{\omega^\calL_0}_{L^\infty}} \right).
\end{split}
\end{equation} Note that \begin{equation*}
\begin{split}
\calE \approx \bell^{-\frac{2}{\pi}(1+c\dlt)} \gg 1. 
\end{split}
\end{equation*}

\subsubsection{$L^2$ for the small scale}\label{subsubsec:inviscid-2}

Now we set \begin{equation*}
\begin{split}
I^\calS(t) := \nrm{u_n^{\calS,\nu} - u_n^{\calS}}_{L^2}^2 
\end{split}
\end{equation*} and again note that $I^\calS(0) = 0$. Compare the equations satisfied by $u_n^S$ and $u_n^{S,\nu}$: \begin{equation*}
\begin{split}
\rd_t u^\calS_{n} + u^\calL_n \cdot \nabla u^\calS_n = 0 ,
\end{split}
\end{equation*} \begin{equation*}
\begin{split}
\rd_t u^{\calS,\nu}_{n} + u^{\calL,\nu}_n \cdot \nabla u^{\calS,\nu}_n = \nu\lap u^{\calS,\nu}_n . 
\end{split}
\end{equation*} Proceeding similarly as in the above, we have \begin{equation*}
\begin{split}
\frac{1}{2} \frac{d}{dt} I^\calS  \le  \nrm{\omg_n^S}_{L^\infty} (I^\calL)^\frac{1}{2} (I^\calS)^{\frac{1}{2}}  + C\nu \nrm{\nabla u_n^S}_{L^2}^2. 
\end{split}
\end{equation*} It was crucially used that \begin{equation*} 
\begin{split}
& \left| \int \nb u_n^\calS \cdot ( u_n^\calL - u_n^{\calL,\nu}) (u_n^\calS - u_n^{\calS,\nu})  \right| \le \nrm{\omg^\calS_n}_{L^\infty} (I^\calS I^\calL )^\frac{1}{2};
\end{split}
\end{equation*} recall that $\omega^\calS_n$ is simply \begin{equation*} 
\begin{split}
& \omega^\calS_n = \begin{pmatrix}
\rd_1 u^\calS_n \\
-\rd_2 u^\calS_n \\
0 
\end{pmatrix}. 
\end{split}
\end{equation*} Using \eqref{eq:large-L2} we write for simplicity \begin{equation*} 
\begin{split}
&  \frac{d}{dt} I^\calS  \le  A (tI^\calS)^{\frac{1}{2}} + B
\end{split}
\end{equation*} where $A$ and $B$ are positive constants defined by \begin{equation*} 
\begin{split}
& A =  \nu^\frac{1}{2} \nrm{\omg^\calS_{n,0}}_{L^\infty}  \nrm{\omega_{n,0}^\calL}_{L^2}\calE^{2\dlt},\quad B = C\nu \nrm{\omg^\calS_{n,0}}_{L^2}^2\calE^{2\dlt}. 
\end{split}
\end{equation*} We have used \eqref{eq:small-omega-Lp}. To estimate $I^\calS$, we instead estimate the solution of the ODE \begin{equation*} 
\begin{split}
& \frac{d}{dt} X = A(tX)^{\frac{1}{2}} + B,\quad X(0) = 0. 
\end{split}
\end{equation*}  We estimate $X(t)$ differently in $0 \le t < t^*$ and $t>t^*$; here $t^*>0$ is the solution to \begin{equation*} 
\begin{split}
& A(t^*X(t^*))^{\frac{1}{2}} = B
\end{split}
\end{equation*} which is uniquely well-defined since initially $B> A(tX)^{\frac{1}{2}} $ and $A(tX)^{\frac{1}{2}} $ is strictly increasing in time. Then, we have trivially \begin{equation*} 
\begin{split}
&Bt^* \le X(t^*) \le 2B t^*
\end{split}
\end{equation*} so using the definition of $t^*$ above, we deduce \begin{equation*} 
\begin{split}
& t^* = c \frac{B^{\frac{1}{2}}}{A},
\end{split}
\end{equation*} with some absolute constant $\frac{1}{2} \le c \le 2$. In turn, this implies that \begin{equation*} 
\begin{split}
& X(t^*) \le 4 \frac{B^{\frac{3}{2}}}{A} 
\end{split}
\end{equation*} Next, for $t > t^*$, we have \begin{equation*} 
\begin{split}
&2 X^{\frac{1}{2}} \frac{d}{dt} X^{\frac{1}{2}} =  \frac{d}{dt}X \le 2A(tX)^{\frac{1}{2}} 
\end{split}
\end{equation*} and integrating in time gives \begin{equation*} 
\begin{split}
&  X^{\frac{1}{2}} (t) \le  X^{\frac{1}{2}}(t^*) + (t-t^*)^{\frac{3}{2}} A . 
\end{split}
\end{equation*} Using the above upper bound for $X(t^*)$ and squaring both sides, we conclude that \begin{equation*} 
\begin{split}
& X(t) \le C( \frac{B^{\frac{3}{2}}}{A} + t^3A^2  ). 
\end{split}
\end{equation*} Recalling the expressions for $A$ and $B$, we deduce that \begin{equation*} 
\begin{split}
& I^\calS(t) \le X(t) \le C  \left( \frac{\nu\nrm{\omg^\calS_{n,0}}_{L^2}^3}{\nrm{\omg^\calS_{n,0}}_{L^\infty} \nrm{\omega^\calL_{n,0}}_{L^2} } \calE^{-3\dlt}+ \nu t^3 \nrm{\omg^\calS_{n,0}}_{L^\infty}^2 \nrm{\omg^\calL_{n,0}}_{L^2}^2  \right)  \calE^{4\dlt} .          
\end{split}
\end{equation*} Simply replacing $t$ on the right hand side by $\dlt/\nrm{\omega^\calL_{n,0}}_{L^\infty}$, we arrive at \begin{equation}\label{eq:small-L2}
\begin{split}
I^\calS(t) \le C\nu \frac{\nrm{\omg^\calS_{n,0}}_{L^\infty}^2}{\nrm{\omega^\calL_{n,0}}_{L^2}} \left(   \frac{ \nrm{\omg^\calS_{n,0}}_{L^2} }{\nrm{\omg^\calS_{n,0}}_{L^\infty}} \calE^{-\dlt}  + \dlt   \frac{\nrm{\omg^\calL_{n,0}}_{L^2}}{\nrm{\omg^\calL_{n,0}}_{L^\infty}}    \right)^3 \calE^{4\dlt}. 
\end{split}
\end{equation}

\subsubsection{$\dot{H}^1$ for the large scale}\label{subsubsec:inviscid-3}  
We define \begin{equation*}
\begin{split}
II^{\calL} = \nrm{\omega_n^{\calL,\nu} - \omega_n^{\calL}}_{L^2}^2 . 
\end{split}
\end{equation*} From the equations 
\begin{equation*}
\begin{split}
&\rd_t\omega_n^{\calL,\nu} + u_n^{\calL,\nu}\cdot\nabla \omega_n^{\calL,\nu} = \nu\lap \omega_n^{\calL,\nu},  \\
&\rd_t\omega_n^\calL + u_n^\calL\cdot\nabla \omega_n^\calL = 0, 
\end{split}
\end{equation*} we obtain \begin{equation*}
\begin{split}
\frac{1}{2}\frac{d}{dt} II^{\calL} \le \nrm{\nabla \omega^{\calL}_{n}}_{L^\infty}\left(I^\calL II^\calL\right)^{\frac{1}{2}} + C\nu \nrm{\nabla\omega_n^\calL}_{L^2}^2. 
\end{split}
\end{equation*} We use \eqref{eq:large-L2} and \eqref{eq:nb-omega-Lp} to bound $I^\calL$ and $\nrm{\nabla \omega^{\calL}_{n}}_{L^p}$, respectively: 
\begin{equation*} 
\begin{split}
&  \frac{d}{dt} II^{\calL} \le C\left((\nu t)^{\frac{1}{2}} \nrm{\nabla \omega^{\calL}_{n,0}}_{L^\infty} \nrm{  \omega^{\calL}_{n,0}}_{L^2} (II^{\calL})^{\frac{1}{2}}  +  \nu \nrm{\nabla \omega^{\calL}_{n,0}}^2_{L^2}  \right) \calE^{2\dlt}. 
\end{split}
\end{equation*} Proceeding as in \ref{subsubsec:inviscid-2}, we deduce that \begin{equation*} 
\begin{split}
& II^{\calL} \le C\left( \frac{B^{\frac{3}{2}}}{A} + t^3A^2 \right)
\end{split}
\end{equation*} where this time, \begin{equation*} 
\begin{split}
& A = \nu^{\frac{1}{2}}  \nrm{\nabla \omega^{\calL}_{n,0}}_{L^\infty} \nrm{  \omega^{\calL}_{n,0}}_{L^2} \calE^{2\dlt},
\end{split}
\end{equation*}\begin{equation*} 
\begin{split}
& B  = \nu \nrm{\nabla \omega^{\calL}_{n,0}}^2_{L^2}    \calE^{2\dlt}.
\end{split}
\end{equation*} We arrive at \begin{equation}\label{eq:large-H1}
\begin{split}
II^\calL \le C \frac{\nu}{ \nrm{\nb \omg^\calL_{n,0}}_{L^\infty} \nrm{\omg^\calL_{n,0}}_{L^2} } \left(  \nrm{\nb\omg^\calL_{n,0}}_{L^2} \calE^{-\dlt} + \dlt \frac{ \nrm{\nb \omg^\calL_{n,0}}_{L^\infty} \nrm{\omg^\calL_{n,0}}_{L^2} }{\nrm{\omg^\calL_{n,0}}_{L^\infty} } \right)^3 \calE^{4\dlt} . 
\end{split}
\end{equation}

\subsubsection{$\dot{H}^1$ for the small scale} \label{subsubsec:inviscid-4}
We now define \begin{equation*} 
\begin{split}
& II^\calS := \nrm{ \omega_n^\calS - \omega_n^{\calS,\nu}}_{L^2}^2 . 
\end{split}
\end{equation*}
Recall that
\begin{equation*}
\begin{split}
&\rd_t  \omega_n^{\calS,\nu} + (u_n^{\calL,\nu}\cdot\nabla)\omega_n^{\calS,\nu} = \nabla u_n^{\calL,\nu} \omega_n^{\calS,\nu}+\nu\Delta \omega_n^{\calS,\nu}, \\
&\rd_t  \omega_n^\calS + (u_n^\calL\cdot\nabla)\omega_n^\calS = \nabla u_n^\calL \omega_n^\calS. 
\end{split}
\end{equation*} 
We have \begin{equation*}
\begin{split}
&\frac{1}{2}\frac{d}{dt} \nrm{ \omega_n^\calS - \omega_n^{\calS,\nu}}_{L^2}^2 + \int (u_n^\calL - u_n^{\calL,\nu})\cdot\nabla\omega_n^\calS \cdot( \omega_n^\calS - \omega_n^{\calS,\nu}) \\
&\qquad = \int \nabla u_n^\calL  (\omega_n^\calS - \omega_n^{\calS,\nu} )\cdot ( \omega_n^\calS - \omega_n^{\calS,\nu}) + \int (\nabla u_n^\calL - \nabla u_n^{\calL,\nu}) \omega_n^{\calS,\nu}\cdot ( \omega_n^\calS - \omega_n^{\calS,\nu})   + \nu \int \lap \omega_n^{\calS,\nu}(\omega_n^{\calS,\nu} - \omega_n^\calS). 
\end{split}
\end{equation*} After some routine massaging, \begin{equation*}
\begin{split}
\frac{1}{2}\frac{d}{dt} \nrm{ \omega_n^\calS - \omega_n^{\calS,\nu}}_{L^2}^2  & \le  C\nrm{\nabla\omega_n^\calS}_{L^\infty} \nrm{ u_n^\calL - u_n^{\calL,\nu}}_{L^2}\nrm{ \omega_n^\calS - \omega_n^{\calS,\nu}}_{L^2}  + (1+C\dlt)\nrm{\rd_1 u_{n,1}^\calL}_{L^\infty} \nrm{ \omega_n^\calS - \omega_n^{\calS,\nu}}_{L^2}^2 \\
&\qquad + C\nrm{\omega_n^{\calS,\nu}}_{L^\infty} \nrm{\nabla u_n^\calL-\nabla u_n^{\calL,\nu}}_{L^2}\nrm{ \omega_n^\calS - \omega_n^{\calS,\nu}}_{L^2}  + C\nu \nrm{\nabla \omega_n^\calS}_{L^2}^2  
\end{split}
\end{equation*}  and we rewrite the above as follows: \begin{equation*} 
\begin{split}
& \frac{d}{dt} II^\calS \le 2(1+C\dlt) \nrm{\rd_1u^\calL_{n,1}}_{L^\infty_{t,x}} II^\calS + C \left( \nrm{\nb \omg_n^\calS}_{L^\infty} (I^\calL)^\frac{1}{2} + \nrm{\omg^{\calS,\nu}_n}_{L^\infty}(II^\calL)^{\frac{1}{2}}  \right)(II^\calS)^{\frac{1}{2}} + C\nu \nrm{\nb\omg_{n}^\calS}_{L^2}^2 .
\end{split}
\end{equation*} To simplify the estimate, we introduce \begin{equation*} 
\begin{split}
& \widetilde{II}^\calS(t):=  \exp(-2t(1+C\dlt) \nrm{\rd_1u^\calL_{n,1}}_{L^\infty_{t,x}} ) II^\calS(t) 
\end{split}
\end{equation*} so that \begin{equation*} 
\begin{split}
&\frac{d}{dt}  \widetilde{II}^\calS(t) \lesssim \left( \nrm{\nb \omg_n^\calS}_{L^\infty} (I^\calL)^\frac{1}{2} + \nrm{\omg^{\calS,\nu}_n}_{L^\infty}(II^\calL)^{\frac{1}{2}}  \right)(\widetilde{II}^\calS)^{\frac{1}{2}} \calE^{- t\nrm{\omg^\calL_{n,0}}_{L^\infty}} + \nu \nrm{\nb\omg_{n}^\calS}_{L^2}^2\calE^{-2t\nrm{\omg^\calL_{n,0}}_{L^\infty}}. 
\end{split}
\end{equation*} Now, from previous bounds, we estimate \begin{equation*} 
\begin{split}
&  \left( \nrm{\nb \omg_n^\calS}_{L^\infty} (I^\calL)^\frac{1}{2} + \nrm{\omg^{\calS,\nu}_n}_{L^\infty}(II^\calL)^{\frac{1}{2}}  \right)\calE^{- t\nrm{\omg^\calL_{n,0}}_{L^\infty}}  \\
& \qquad  \lesssim \nu^\frac{1}{2}\calE^{2\dlt}\left( \left(\nrm{\nb\omg_{n,0}^\calS}_{L^\infty}  + \dlt(1 + \dlt \ln \frac{1}{\bell})\frac{\nrm{\nb\omg^\calL_{n,0}}_{L^\infty}}{\nrm{\omg^\calL_{n,0}}_{L^\infty}} \nrm{\omg^\calS_{n,0}}_{L^\infty} \right) \frac{\dlt^{\frac{1}{2}}}{\nrm{\omg^\calL_{n,0}}_{L^\infty}^{\frac{1}{2}}} \nrm{\omg^\calL_{n,0}}_{L^2}  \right. \\
& \left. \qquad  +  \frac{ \nrm{\omg^\calS_{n,0}}_{L^\infty}  }{ \nrm{\nb \omg^\calL_{n,0}}^{\frac{1}{2}}_{L^\infty} \nrm{\omg^\calL_{n,0}}^{\frac{1}{2}}_{L^2} } \left(  \nrm{\nb\omg^\calL_{n,0}}_{L^2} \calE^{-\dlt} + \dlt \frac{ \nrm{\nb \omg^\calL_{n,0}}_{L^\infty} \nrm{\omg^\calL_{n,0}}_{L^2} }{\nrm{\omg^\calL_{n,0}}_{L^\infty} } \right)^{\frac{3}{2}} \right)  := \nu^{\frac{1}{2}} A
\end{split}
\end{equation*} and \begin{equation*} 
\begin{split}
& \nu \nrm{\nb\omg_{n}^\calS}_{L^2}^2\calE^{-2t\nrm{\omg^\calL_{n,0}}_{L^\infty}} \lesssim \nu \left(\nrm{\nb\omg_{n,0}^\calS}_{L^2} + \dlt(1 + \dlt \ln \frac{1}{\bell})\frac{\nrm{\nb\omg^\calL_{n,0}}_{L^\infty}}{\nrm{\omg^\calL_{n,0}}_{L^\infty}} \nrm{\omg^\calS_{n,0}}_{L^2} \right)^2\calE^{2\dlt} =: \nu  B. 
\end{split}
\end{equation*} We compare $ \widetilde{II}^\calS(t)$ with $\nu X(t)$ solving \begin{equation*} 
\begin{split}
& \frac{d}{dt} X(t) = A (X(t))^{\frac{1}{2}} +  B , \quad X(0) = 0. 
\end{split}
\end{equation*} We easily obtain that \begin{equation*} 
\begin{split}
&  \widetilde{II}^\calS(t) \le  \nu X(t) \le C\nu (\frac{B^2}{A^2} + A^2 t^2) 
\end{split}
\end{equation*} and hence \begin{equation*} 
\begin{split}
& {II}^\calS(t) \le   C\nu (\frac{B}{A} + \frac{A\dlt}{\nrm{\omg^\calL_{n,0}}_{L^\infty}})^2 \calE^{2\dlt}. 
\end{split}
\end{equation*} We keep the expressions $A, B$ as they are for now and simplify later with our explicit choice of parameters.

\subsubsection{Final estimate and Proof of Theorem \ref{thm:main4}}\label{subsubsec:final}

In this section, we complete the proof of Theorem \ref{thm:main4}. We proceed in several steps.

\medskip

\noindent \textit{Step 1. Inviscid limit holds for the $L^2$ of the vorticity.}

\medskip

\noindent We would like to have \begin{equation}\label{eq:inv-final}
\begin{split}
\frac{1}{t_n} \int_0^{t_n} \nrm{\omega_n^\calS(t)}_{L^2}^2 dt \gg \frac{1}{t_n}\int_0^{t_n} II^\calS(t) dt,
\end{split}
\end{equation}  {where we recall that \begin{equation*}
	\begin{split}
	II^\calS = \nrm{ \omega_n^\calS - \omega_n^{\calS,\nu}}_{L^2}^2 .
	\end{split}
	\end{equation*} Now recalling that \begin{equation*}
	\begin{split}
	\calE = \exp\left( (1 + C\dlt) \frac{\nrm{\rd_1 u_{n,1}^\calL}_{L^\infty_{t,x}}}{\nrm{\omega^\calL_0}_{L^\infty}} \right),
	\end{split}
	\end{equation*} w}e bound the right hand side simply by \begin{equation*} 
\begin{split}
& \frac{1}{t_n}\int_0^{t_n} II^\calS(t) dt \le \sup_{t \in [0,t_n]} II^\calS(t) \lesssim \nu  (\frac{B}{A} + \frac{A\dlt}{\nrm{\omg^\calL_{n,0}}_{L^\infty}})^2 \calE^{2\dlt}. 
\end{split}
\end{equation*} 
\noindent In this case, a lower bound on the  left hand side is given by \begin{equation*}
\begin{split}
\frac{1}{t_n} \int_0^{t_n} \nrm{\omega_n^{\calS,\nu}(t)}_{L^2}^2 dt \ge \frac{1}{t_n} \int_0^{t_n} \nrm{\omega_{n,1}^{\calS,\nu}(t)}_{L^2}^2 dt \gtrsim \calE^{2\dlt - c\dlt^2} \frac{\nrm{\omega_{n,0}^\calS}_{L^2}^2}{\dlt} .
\end{split}
\end{equation*} The above bound follows from Lemma \ref{lem:LLD} and the Cauchy formula (applied to the first component, recalling that $\omega_{n,0,2} = 0$) \begin{equation*} 
\begin{split}
& \omega_{n,1}^\calS(t,\eta(t,x))  = \rd_1\eta_1(t,x) \omega_{n,0,1}^\calS(x). 
\end{split}
\end{equation*} 
This determines the maximal value of $\nu = \nu_n$ which allows for the crucial estimate \eqref{eq:inv-final}: namely, \begin{equation}\label{nu} 
\begin{split}
& \nu_n := \frac{c}{\dlt} \frac{1}{  (\frac{B}{A} + \frac{A\dlt}{\nrm{\omg^\calL_{n,0}}_{L^\infty}})^2 } \nrm{\omg^\calS_{n,0}}_{L^2}^2. 
\end{split}
\end{equation}


\medskip

\noindent \textit{Step 2. The expression for $\nu_n$.}

\medskip

\noindent Let us now extract the main terms in \eqref{nu}, with our explicit choice of $\omega^\calL_{n,0}$ and $\omega^\calS_{n,0}$. We start by recalling that  \begin{equation*} 
\begin{split}
& A:=  \calE^{2\dlt}\left( \left(\nrm{\nb\omg_{n,0}^\calS}_{L^\infty}  + \dlt(1 + \dlt \ln \frac{1}{\bell})\frac{\nrm{\nb\omg^\calL_{n,0}}_{L^\infty}}{\nrm{\omg^\calL_{n,0}}_{L^\infty}} \nrm{\omg^\calS_{n,0}}_{L^\infty} \right) \frac{\dlt^{\frac{1}{2}}}{\nrm{\omg^\calL_{n,0}}_{L^\infty}^{\frac{1}{2}}} \nrm{\omg^\calL_{n,0}}_{L^2}  \right. \\
& \left. \qquad  +  \frac{ \nrm{\omg^\calS_{n,0}}_{L^\infty}  }{ \nrm{\nb \omg^\calL_{n,0}}^{\frac{1}{2}}_{L^\infty} \nrm{\omg^\calL_{n,0}}^{\frac{1}{2}}_{L^2} } \left(  \nrm{\nb\omg^\calL_{n,0}}_{L^2} \calE^{-\dlt} + \dlt \frac{ \nrm{\nb \omg^\calL_{n,0}}_{L^\infty} \nrm{\omg^\calL_{n,0}}_{L^2} }{\nrm{\omg^\calL_{n,0}}_{L^\infty} } \right)^{\frac{3}{2}} \right)  . 
\end{split}
\end{equation*} We compute 
\begin{equation}\label{eq:norm1} 
\begin{split}
& \nrm{\omg^\calL_{n,0}}_{L^\infty} = 1,\quad \nrm{\omg^\calL_{n,0}}_{L^2} \approx L, \quad  
\nrm{\nb\omg^\calL_{n,0}}_{L^2}\approx \ell^{-\frac{1}{2}}L^{\frac{1}{2}},\quad \nrm{\nb\omg^\calL_{n,0}}_{L^\infty} \approx \ell^{-1}, \quad \|u^{\mathcal L}_{n,0}\|_{L^2}\approx L^{2}.
\end{split}
\end{equation}  
We have, with a free parameter $q \in \bbR$ to be determined,
\begin{equation}\label{eq:norm2}  
\begin{split}
& \nrm{\omg^\calS_{n,0}}_{L^2} = \tilde\ell^{-\frac{2}{q}}, \quad \nrm{\nb\omg^\calS_{n,0}}_{L^2} \approx \tell^{-1-\frac{2}{q}},
\quad
\|u^{\mathcal S}_{n,0}\|_{L^2}\approx \tilde\ell^{1-\frac{2}{q}}, \quad \nrm{\omg^\calS_{n,0}}_{L^\infty} \approx \tell_n^{-1-\frac{2}{q}}, \quad \nrm{\nb\omg^\calS_{n,0}}_{L^\infty} \approx \tell_n^{-2-\frac{2}{q}}. 
\end{split}
\end{equation} Recall that $\tell_n = \ell_n^{1+c\dlt}$ and $\bell_n = \ell L^{-1}:= 2^{-n}$. Now we observe that \begin{equation*} 
\begin{split}
& \tell_n^{-1} \approx \frac{ \nrm{\nb\omg_{n,0}^\calS}_{L^\infty} }{ \nrm{\omg^\calS_{n,0}}_{L^\infty}} \gg  \dlt(1 + \dlt \ln \frac{1}{\bell_n})\frac{\nrm{\nb\omg^\calL_{n,0}}_{L^\infty}}{\nrm{\omg^\calL_{n,0}}_{L^\infty}} \approx  \dlt(1 + \dlt \ln \frac{1}{\bell_n})\ell_n^{-1}
\end{split}
\end{equation*} since $L^{-1} \ge 1 \gg \bell $. 
Note that the above estimate is independent of $L$. Next, it is not difficult to see that (recalling $\calE \gg 1$)
\begin{equation*} 
\begin{split}
& \frac{\nrm{\nb\omg^\calL_{n,0}}_{L^2}}{\nrm{\omg^\calL_{n,0}}_{L^2} } \calE^{-\dlt} \ll   \dlt \frac{ \nrm{\nb \omg^\calL_{n,0}}_{L^\infty} }{\nrm{\omg^\calL_{n,0}}_{L^\infty} }.
\end{split}
\end{equation*} These observations simplify $A$ significantly: \begin{equation*} 
\begin{split}
 A &\approx \calE^{2\dlt} \left(   \frac{\dlt^{\frac{1}{2}}\nrm{\nb\omg_{n,0}^\calS}_{L^\infty} \nrm{\omg^\calL_{n,0}}_{L^2} }{\nrm{\omega^\calL_{n,0}}^{\frac{1}{2}}_{L^\infty}}   +  \frac{ \nrm{\omg^\calS_{n,0}}_{L^\infty}  }{ \nrm{\nb \omg^\calL_{n,0}}^{\frac{1}{2}}_{L^\infty} \nrm{\omg^\calL_{n,0}}^{\frac{1}{2}}_{L^2} } \left(  \dlt \frac{ \nrm{\nb \omg^\calL_{n,0}}_{L^\infty} \nrm{\omg^\calL_{n,0}}_{L^2} }{\nrm{\omg^\calL_{n,0}}_{L^\infty} } \right)^{\frac{3}{2}} \right) \\
&\approx \calE^{2\dlt}\left( \dlt^\frac{1}{2} \tell^{-2-\frac{2}{q}} L + \dlt^{\frac{3}{2}} \tell^{-1-\frac{2}{q}} \ell^{-1}L \right) \\
&\approx \dlt^{\frac{1}{2}}\calE^{2\dlt} \tell^{-2-\frac{2}{q}}L.
\end{split}
\end{equation*} Next, we similarly obtain that \begin{equation*} 
\begin{split}
& B 
 \approx (\tell^{-1-\frac{2}{q}})^2\calE^{2\dlt}.
\end{split}
\end{equation*} Then we can see that \begin{equation*} 
\begin{split}
& \frac{A\dlt}{\nrm{\omg^\calL_{n,0}}_{L^\infty}}
  \gg 
\frac{B}{A}.
\end{split}
\end{equation*} Recalling that $\calE \approx \bell^{-\frac{2}{\pi}(1+C\dlt)}$, we have the following formula for $\nu_n$: \begin{equation}\label{eq:nu_n}
\begin{split}
\nu_n = \frac{c}{\dlt^4} \bell^{4c_0\dlt(1-C\dlt)} \ell^{4(1+C\dlt)} L^{-2}
\end{split}
\end{equation}
which is independent of $q$. Rewriting in terms of $\bell$ and $L$ using $\ell = \bell L$, it is easy to see that $\nu_n\rightarrow 0$ if $L\le 1$. 

\medskip

\noindent \textit{Step 3. Enhanced dissipation.}

\medskip

\noindent  Given our definition of $\nu_n$, we would like to have, with $t_n = \frac{\dlt}{\nrm{\omega_{n}^\calL}_{L^\infty}}$, \begin{equation*}
\begin{split}
\nu_n^{\bar{a}_0} \frac{1}{t_n}\int_0^{t_n} \nrm{\omega_n^\calS(t)}_{L^2}^2 dt  \gtrsim \nrm{u_{n,0}}_{L^2}^2. 
\end{split}
\end{equation*} We compute: 
\begin{equation} \label{avoid-trivial-zeroth-law}
\begin{split}
&\nu_n^{\bar{a}_0} \frac{1}{t_n}\int_0^{t_n} \nrm{\omega_n^\calS(t)}_{L^2}^2 dt  \gtrsim_\dlt \bell^{4\bar{a}_0(1+C\dlt)} L^{\bar{a}_0 (2+C\dlt)} \bell^{-2c_0\dlt(1-C\dlt)} \nrm{\omg_{n,0}^\calS}_{L^2}^2. 
\end{split}
\end{equation}
 We require that the above satisfies $\gtrsim \nrm{u_{n,0}^\calS}_{L^2}^2$ and $\gtrsim \nrm{u_{n,0}^\calL}_{L^2}^2$. For the former, we need \begin{equation*}
\begin{split}
\bell^{4\bar{a}_0(1+C\dlt)} \bell^{-2c_0\dlt(1-C\dlt)} L^{\bar{a}_0 (2+C\dlt)} \gtrsim \tell^2 = \bell^{2(1+C\dlt)}L^{2(1+C\dlt)}. 
\end{split}
\end{equation*} This requirement sets restriction on $L$: \begin{equation*}
\begin{split}
\bell^{ \frac{(2\bar{a}_0-1)(1+C_1\dlt)-c_0\dlt}{(1-\bar{a}_0)(1+C_2\dlt)} } \gtrsim L. 
\end{split}
\end{equation*} From the above, one sees that there are two cases: $a_0 \le \frac{1}{2}$ and $1>a_0> \frac{1}{2}$. In the former, the left hand side satisfies $\gg 1$, so we simply fix $L = 1$ for all $n$. In the latter, we simply define \begin{equation*}
\begin{split}
L = \bell^{\gamma },\quad \gamma:=  \frac{(2\bar{a}_0-1)(1+C_1\dlt)-c_0\dlt}{(1-\bar{a}_0)(1+C_2\dlt)}
\end{split}
\end{equation*} where $C_1,C_2$, and $c_0$ are some positive absolute constants. Note that $L \ll 1$. In the following, we shall proceed with assuming $1>a_0>\frac{1}{2}$. Now, for $\gtrsim \nrm{u_{n,0}^\calL}_{L^2}^2$, we need \begin{equation*}
\begin{split}
\bell^{4\bar{a}_0(1+C\dlt)} \bell^{-2c_0\dlt(1-C\dlt)} L^{\bar{a}_0 (2+C\dlt)} \tell^{-\frac{4}{q}} \gtrsim L^2
\end{split}
\end{equation*} and this determines the value of $q$. We can just require that $\nrm{u^\calS_{n,0}}_{L^2}^2 {\approx} L^2$; \begin{equation*}
\begin{split}
\bell^{ (1+\gamma)(1+C_3\dlt)(1-\frac{2}{q}) }=\tell^{1-\frac{2}{q}}  {\approx} L =  \bell^\gamma 
\end{split}
\end{equation*} which gives \begin{equation*}
\begin{split}
\frac{\gamma}{(1+\gamma)(1+C_3\dlt)} {=} 1 - \frac{2}{q}. 
\end{split}
\end{equation*} Note that {$q = 2\left(1-\frac{\gamma}{(1+\gamma)(1+C_3\delta)}\right)^{-1}$} clearly satisfies the above, and in this case we have that $\nrm{u^\calS_{n,0}}_{L^2} \approx {L^2}$. Finally, we note that when $a_0\le \frac{1}{2}$, we already have the lower bound on the energy from the large-scale: \begin{equation*}
\begin{split}
\nrm{u^\calL_{n,0}}_{L^2}^2 \gtrsim 1, 
\end{split}
\end{equation*} so that we can take $q$ in a way that $\nrm{\omg^\calS_{n,0}}_{L^2}$ is uniformly bounded in $n$. The proof is now complete. \qedsymbol

\section{Conclusion}

We prepared small-scale vortex blob and large-scale anti-parallel
vortex tubes for the initial data, and showed that the corresponding
3D Navier-Stokes flow creates instantaneous vortex-stretching. {In turn, using this stretching, we showed that the  flows satisfy a modified version of the zeroth law in a uniform time interval which in particular implies enhanced dissipation.} 

\bibliographystyle{amsplain}


\end{document}